\documentclass[11pt,reqno]{amsart}
\usepackage[a4paper,margin=30mm]{geometry}

\usepackage{amsmath, amssymb, amsthm, amsrefs}
\usepackage{mathtools}
\usepackage{mathrsfs}
\usepackage{bbm}
\usepackage{cases}
\usepackage{graphicx} 
\usepackage{svg}
\usepackage{booktabs}
\usepackage{siunitx}
\usepackage{float} 
\usepackage{subfigure} 
\usepackage{url}
\usepackage{enumitem}
\usepackage{color}
\usepackage{xcolor}
\usepackage{epstopdf}
\usepackage{hyperref} 
\newtheorem{theorem}{Theorem}[section]
\newtheorem{lemma}[theorem]{Lemma}
\newtheorem{corollary}[theorem]{Corollary}
\newtheorem{proposition}[theorem]{Proposition}

\theoremstyle{definition}
\newtheorem{conjecture}[theorem]{Conjecture}

\theoremstyle{remark}
\newtheorem{remark}[theorem]{Remark}

\numberwithin{equation}{section}

\newcommand{\dd}{\,\mathrm{d}}
\newcommand{\dist}{\operatorname{dist}}

\providecommand{\eat}[1]{}

\begin{document}

\title[The Weinstock inequality in hyperbolic space]{The Weinstock inequality for convex domains\\ in hyperbolic space}

\author{Jin Sun}
\address{Jin Sun: School of Mathematical Sciences, Fudan University, 200433, Shanghai, China}
\email{\href{mailto:jsun22@m.fudan.edu.cn}{jsun22@m.fudan.edu.cn}}

\author{Lili Wang}
\address{Lili Wang: School of Mathematics and Statistics, Key Laboratory of Analytical Mathematics and Applications (Ministry of Education), Fujian Key Laboratory of Analytical Mathematics and Applications (FJKLAMA), Fujian Normal University, 350117, Fuzhou, China}
\email{\href{mailto:liliwang@fjnu.edu.cn}{liliwang@fjnu.edu.cn}}

\author{Tao Wang}
\address{Tao Wang: Beijing International Center for Mathematical Research, Peking University, 100871, Beijing, China}
\email{\href{mailto:taowang25@pku.edu.cn}{taowang25@pku.edu.cn}}

\subjclass[2020]{Primary 53C21; Secondary 35P15, 58C40}
\keywords{Steklov eigenvalue, Weinstock inequality, hyperbolic space, inverse mean curvature flow, mass transplantation}

\begin{abstract}
We prove the Weinstock inequality for the first Steklov eigenvalue of convex domains in hyperbolic space $\mathbb{H}^{n}$, resolving Open Question~4.27 of \cite{CGGS2024} for the remaining case $n=3$. Our argument replaces the global monotonicity required in earlier work \cite{GuLiWan2025} with a one-crossing property, which is established via an explicit slope comparison. The proof works uniformly for all $n\geq 3$.
\end{abstract}

\maketitle

\section{Introduction}

Let $\Omega$ be a compact Riemannian manifold with Lipschitz boundary $\Sigma=\partial\Omega$. Denote by $\nu$ the outward unit normal on $\Sigma$. The Steklov eigenvalue problem, introduced by Stekloff \cite{Stekloff1902}, asks for $\sigma\in \mathbb{R}$ such that
\begin{equation*} 
  \Delta u=0\ \text{ in }\Omega,
  \qquad
  \frac{\partial u}{\partial\nu}=\sigma\,u\ \text{ on }\Sigma,
\end{equation*}
admits a nonzero solution. The Steklov spectrum is discrete. It consists of a sequence of eigenvalues 
\[
 0=\sigma_{0}<\sigma_1\leq\sigma_{2}\leq\cdots\nearrow\infty,
\]
with $\sigma_1$ given variationally by
\begin{equation}\label{eq:rayleigh}
  \sigma_1(\Omega)
  =\min\left\{\frac{\int_{\Omega}|\nabla u|^{2}\dd v}{\int_{\Sigma}u^{2}\dd\mu}
  \ :\
  u\in H^{1}(\Omega)\setminus\{0\},\ \int_{\Sigma}u\dd\mu=0\right\}.
\end{equation}
The problem governs the Dirichlet‑to‑Neumann operator. Hence, $\sigma_1$ is more sensitive to the geometry of $\Sigma$ than to that of $\Omega$, and the
natural shape optimization problem prescribes the boundary area. See \cites{GirouardPolterovich2017,CGGS2024} for surveys.

In Euclidean space, Weinstock \cite{Weinstock1954} proved that among simply connected planar domains with fixed perimeter, the disk uniquely maximizes $\sigma_1(\Omega)$. It is also possible to maximize $\sigma_1(\Omega)$ by fixing the volume, see \cites{Brock2001,Escobar1999}. The Weinstock inequality for convex domains in all dimensions was established by Bucur--Ferone--Nitsch--Trombetti \cite{BFNT2021}, and their result was later extended by Kwong and Wei \cite{KwongWei2023} to star-shaped mean convex domains. In hyperbolic space $\mathbb{H}^{n}$, the fixed-perimeter version appears as Open Question~4.27 of \cite{CGGS2024}. The precise formulation is as follows.
\begin{conjecture}[{\cite{CGGS2024}*{Open Question~4.27}}]\label{conj:main}
Let $\Omega \subset \mathbb{H}^{n}$ be a bounded convex domain and let $\Omega^*$ be a geodesic ball with $|\partial\Omega^*|=|\partial\Omega|$. Is it true that 
\begin{equation}\label{eq:Weinstock_inequality}
    \sigma_1(\Omega)\leq\sigma_1(\Omega^*)
\end{equation}
with equality if and only if $\Omega$ is a geodesic ball?
\end{conjecture}

The case $n=2$ is classical: a convex domain in $\mathbb{H}^{2}$ is simply connected, hence the inequality and its rigidity follow from \cite{FraserSchoen2011}. In higher dimensions the decisive progress is due to Gu-Li-Wan \cite{GuLiWan2025}, who proved Conjecture~\ref{conj:main} for all $n\geq4$. Indeed, they obtained the inequality \eqref{eq:Weinstock_inequality} for a larger class of star-shaped mean convex domains by combining the inverse mean curvature flow with Weinberger's mass transplantation. Their method left $n=3$ open, because they could treat $\mathbb{H}^{3}$ only under an a priori bound on the circumradius \cite{GuLiWan2025}*{Proposition~4.9}, and conjectured in \cite{GuLiWan2025}*{Remark~4.8} that the restriction is merely a limitation of their method.

We stress that Conjecture~\ref{conj:main} is strictly stronger than the fixed-volume statement of \cite{BinoySanthanam2014}: since $\sigma_1$ of a geodesic ball decreases with its radius, and by the isoperimetric inequality the ball with a given perimeter has volume at least as large as that with the same volume, the fixed-perimeter inequality implies the fixed-volume one, but the converse does not hold.

In this paper, we establish the following theorem.
\begin{theorem}\label{thm:main}
Let $n\geq3$ and let $\Omega \subset \mathbb{H}^{n}$ be a bounded convex domain with smooth boundary. Let $\Omega^*=B(R)$ be the geodesic ball with
$|\partial\Omega^*|=|\partial\Omega|$. Then
\begin{equation}\label{eq:main}
\sigma_1(\Omega)\leq\sigma_1(\Omega^*),
\end{equation}
and equality holds if and only if $\Omega$ is a geodesic ball.
\end{theorem}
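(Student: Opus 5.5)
We follow the Gu--Li--Wan strategy: Weinberger-type test functions combined with mass transplantation, with the global monotonicity step replaced by a one-crossing argument. Place the center of mass so that the test functions $u_i=f(r)\,x_i/r$ have zero boundary mean, where $r$ is the distance to a point $p$ and $f$ is the radial profile of the first Steklov eigenfunction of $B(R)$. This normalization is obtained by the usual Brouwer fixed-point argument. Summing the Rayleigh quotients \eqref{eq:rayleigh} gives
\[
\sigma_1(\Omega)\int_\Sigma f(r)^2\dd\mu\le\int_\Omega\Big(f'(r)^2+(n-1)\frac{f(r)^2}{\sinh^2 r}\Big)\dd v .
\]
The goal is to show that the right-hand side is at most its value on $B(R)$, and that $\int_\Sigma f^2\,\dd\mu\ge |\partial B(R)|f(R)^2$, each with equality only for balls centered at $p$.

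For the boundary term, write $\Sigma$ star-shaped about $p$; convexity guarantees this for $p\in\Omega$. Then run the inverse mean curvature flow from $\Sigma$. In $\mathbb{H}^n$ the flow is smooth and converges to round spheres after rescaling, and area grows like $e^t$. One shows that $\int_{\Sigma_t}F(r)\,\dd\mu$ is monotone along the flow for a suitable weight, exactly as in \cite{GuLiWan2025}. This yields $\int_\Sigma f^2\ge |\partial B(R)| f(R)^2$ once $f^2$ satisfies the needed convexity-type inequality in terms of the area radius. I would take this step from \cite{GuLiWan2025} largely unchanged, since it holds for all $n\ge3$.

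For the interior term, set $g(r)=f'(r)^2+(n-1)f(r)^2/\sinh^2 r$. Mass transplantation would finish immediately if $g$ were decreasing, since $|\Omega|\le|B(R)|$ by the isoperimetric inequality. Global monotonicity is exactly what fails for $n=3$ at large $R$. It is also not needed. Let $\rho$ be the radius of the volume-equivalent ball $B(\rho)\subset B(R)$. What we need is
\[
\int_\Omega g\le\int_{B(\rho)}g^\sharp\le\int_{B(R)}g .
\]
Here $g^\sharp$ is the symmetric decreasing rearrangement, which is bounded by the profile built from $\max_{[r,\infty)}g$. A cleaner route is to extend $g$ beyond $R$ by the constant $g(R)$. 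If $g-g(R)$ changes sign exactly once on $(0,R)$, from positive to negative, then the set where $g>g(R)$ is a ball $B(r_0)$. The comparison reduces to showing $\int_{\Omega\setminus B(R)}g(R)\le\int_{B(R)\setminus\Omega}g$. That would be false in general, so I would instead combine it with the boundary term. The relevant quantity is $\int_\Omega g-\sigma_1(B(R))\int_\Sigma f^2$, and it is controlled by testing against the level function $G$. This function has $\Delta G=g$ inside and the Steklov identity on $\partial B(R)$, and divergence arguments then use $G\le$ const on the relevant region. The one-crossing property is the essential input here.

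The main obstacle is proving the one-crossing property of $g$ (or of $g-\sigma_1 h$ for the boundary weight $h$) for $n=3$ and arbitrary $R$. The profile $f$ solves the ODE $f''+(n-1)\coth r\,f'-(n-1)f/\sinh^2 r=0$ with $f(0)=0$. For $n=3$ it is explicit: $f=\coth r-r/\sinh^2 r$ up to scaling. I would compute $g'$ with the ODE, then compare the slopes of the two terms $f'^2$ and $f^2/\sinh^2r$. The aim is to show that at any zero of $g-c$, the derivative has a fixed sign. This slope comparison is an elementary but delicate inequality among hyperbolic functions, and it is the step I expect to consume most of the work. Rigidity follows by tracing equality through the isoperimetric and IMCF steps, which forces $\Sigma$ to be a sphere centered at $p$.
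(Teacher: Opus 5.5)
There is a genuine gap. It lies in the boundary step, not where you placed the difficulty.

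\textbf{The interior term is easy.} With $f=\varphi$ (the profile is the same for every centred ball, up to scaling), your interior density is $g=\mathcal{E}=(\varphi')^2+q^2$ with $q=\sqrt{n-1}\,\varphi/\lambda$. Both terms are positive and strictly decreasing: $\varphi'$ because $\varphi$ is concave, and $q$ because $q'=(n\varphi'-1)/(\sqrt{n-1}\,\lambda)$ and $\varphi'<1/n$. So $g$ is strictly decreasing in every dimension, and Weinberger's principle gives $\int_\Omega g\le\int_{B(R)}g$ at once. Your claim that global monotonicity of $g$ fails for $n=3$ is therefore mistaken.

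\textbf{The boundary estimate you need is not available.} Because the interior step is trivial, your estimate $\int_\Sigma f^2\dd\mu\ge|\partial B(R)|\,f(R)^2$ at the area radius $R$ would make the theorem a two-line consequence for all $n\ge3$. That already signals it is not what the Gu--Li--Wan inverse mean curvature flow delivers. Their monotone quantity gives only Theorem~\ref{thm:moment}: $\int_\Sigma\varphi^2\dd\mu\ge|\Sigma|\,|\Omega|^2\eta(\mathcal{W}(\Omega))^2$. This is stated in terms of the weighted volume $\mathcal{W}(\Omega)=\int_\Omega w$, where $w=\lambda'\varphi/\lambda$ is increasing. Let $\rho$ be the volume radius, $|B(\rho)|=|\Omega|$. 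Transplantation gives $\mathcal{W}(\Omega)\ge\mathcal{W}(B(\rho))$, and $\eta$ is decreasing. Hence this lower bound is at most $|\Sigma|\varphi(\rho)^2\le|\Sigma|\varphi(R)^2$, strictly unless $\Omega$ is a ball. Your boundary inequality is thus strictly stronger than the result you propose to import \emph{largely unchanged}. The \emph{convexity-type inequality} that would produce it is neither identified nor proved.

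\textbf{The missing idea is to couple the two terms rather than estimate them separately.} The paper proceeds as follows.
\begin{enumerate}
\item It works at the volume radius $\rho$. It bounds $\eta(\mathcal{W})^2$ from below by the tangent line of the convex function $\eta^2$ at $\mathcal{W}_0=\mathcal{W}(B(\rho))$, whose slope is $-2(n-1)/(|B(\rho)|\,|S(\rho)|^2)$.
\item The boundary loss, proportional to $\mathcal{W}(\Omega)-\mathcal{W}_0\ge0$, must then be absorbed by the interior term. What has to be transplanted is the combined density $\Psi_\rho=\mathcal{E}+2(n-1)\varphi'(\rho)\,w$. It need not be monotone, because $w$ increases. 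A density of this type, not $g$, is where the global monotonicity used by Gu--Li--Wan breaks down for $n=3$.
\item The needed fact is the one-crossing of $\Psi_\rho$ at $r=\rho$ (Lemma~\ref{lem:crossing}). It follows from the identity $\Psi_\rho(r)-\Psi_\rho(\rho)=(\varphi'(r)-\varphi'(\rho))^2+q(r)^2-q(\rho)^2$ together with the slope comparison $|\varphi''|\le|q'|$ of Lemma~\ref{lem:slope}. That comparison rests on the ODE bound $\varphi'\le(n+2\lambda')/(n(A+2\lambda'))$.
\item Only afterwards does one pass from $\rho$ to the area radius, using the isoperimetric inequality and the monotonicity of $t\mapsto|S(t)|\,\sigma_1(B(t))$.
\end{enumerate}

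Your paragraph on the level function $G$ gestures at coupling $g$ with a boundary weight. However, it identifies neither $w$, nor the tangent-line linearisation, nor the correct crossing radius and density. The divergence argument with $G\le$ const is also left unspecified. As written, the proposal does not prove the theorem.
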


Convexity is used only to ensure that the boundary is star-shaped and mean convex. The proof actually yields a sharper, scale‑invariant theorem.
\begin{theorem}\label{thm:starshaped}
Let $n\geq3$ and let $\Omega \subset \mathbb{H}^{n}$ be a smooth bounded domain with star-shaped mean convex boundary $\partial \Omega$. If $B(R)$ is the geodesic ball with $|B(R)|=|\Omega|$, then
\begin{equation}\label{eq:main-product}
  |\partial\Omega|\,\sigma_1(\Omega)\leq|\partial B(R)|\,\sigma_1(B(R)),
\end{equation}
and equality holds if and only if $\Omega$ is a geodesic ball.
\end{theorem}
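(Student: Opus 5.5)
We will follow the scheme of Gu--Li--Wan: Weinberger mass transplantation combined with the inverse mean curvature flow (IMCF), but replace their global monotonicity hypothesis by a one-crossing argument.

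\textbf{Test functions and reduction.} Place the origin $o$ at a point for which $\Omega$ is star-shaped. Write the first nonconstant Steklov eigenfunctions of $B(R)$ as $f(r)\,x_i/r$, $i=1,\dots,n$. Here $f$ solves
\[
f''+(n-1)\coth r\, f'-\frac{n-1}{\sinh^2 r}f=0,\qquad f(0)=0,
\]
and $\sigma_1(B(R))=f'(R)/f(R)$. Extend $f$ past $R$ by a suitable convention, for instance $f\equiv f(R)$ for $r\ge R$. Choose the origin by a Brouwer/degree argument so that $\int_{\partial\Omega} f(r)\,x_i/r\,\dd\mu=0$ for all $i$, and insert these $n$ functions into \eqref{eq:rayleigh}. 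Summing over $i$ and using $\sum_i (x_i/r)^2=1$ gives
\[
\sigma_1(\Omega)\int_{\partial\Omega} f(r)^2\dd\mu\le\int_\Omega\Big(f'(r)^2+\frac{(n-1)f(r)^2}{\sinh^2 r}\Big)\dd v .
\]
Denote the integrand on the right by $G(r)$. By Weinberger's transplantation the volume term satisfies $\int_\Omega G\le\int_{B(R)}G$, because $G$ is nonincreasing beyond $R$ (with the constant extension) and $|\Omega|=|B(R)|$. Since $\int_{B(R)}G=\sigma_1(B(R))f(R)^2|\partial B(R)|$, inequality \eqref{eq:main-product} reduces to the boundary estimate
\[
\frac{1}{|\partial\Omega|}\int_{\partial\Omega} f(r)^2\dd\mu\;\ge\; f(R)^2 .
\]

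\textbf{Boundary estimate via IMCF.} Run IMCF from $\partial\Omega$. By Gerhardt and Urbas it exists for all time, stays star-shaped and mean convex, and becomes umbilic at infinity. Along the flow $\dd\mu_t/\dd t=\dd\mu_t$, so $|\Sigma_t|=e^t|\partial\Omega|$. Set
\[
Q(t)=e^{-t}\int_{\Sigma_t}\Phi(r)\,\dd\mu_t,
\]
where $\Phi(r)=f(r)^2$, or a modification $\Phi=\phi(\cdot)$ of it normalized against the area radius. One then compares $Q(0)$ with its limit, which is the ball value. Differentiating gives
\[
Q'(t)=e^{-t}\int_{\Sigma_t}\Phi'(r)\,\frac{\langle\partial_r,\nu\rangle}{H}\,\dd\mu_t .
\]
Gu--Li--Wan estimated this term using monotonicity of a ratio such as $\Phi'(r)\sinh r/(\Phi(r)\cosh r)$, or the comparison of $\Phi$ with a function of $\cosh r$. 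That monotonicity fails for large $r$ when $n=3$, which is why they needed a circumradius bound.

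\textbf{Main obstacle: the one-crossing property.} Instead of global monotonicity, we only need the difference between $\Phi$ and the comparison function determined by the Minkowski/Heintze--Karcher type identity to change sign exactly once. Concretely, write the relevant flow quantity as $\int(\Phi-\lambda\Psi)$, with $\Psi(r)=\cosh r$ (or $\sinh^2 r$) coming from the hyperbolic Minkowski formula $\int H\langle V,\nu\rangle=(n-1)\int\cosh r$. Choose $\lambda$ so that equality holds at the area radius $R$. Once $\Phi-\lambda\Psi$ has a single sign change at $R$, we can split $\Sigma_t$ into the parts with $r<R$ and $r>R$ and conclude the sign of $Q'$ or of the final deficit.

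To prove the one crossing, we compare slopes: the logarithmic derivative $(\log\Phi)'$ versus $(\log\Psi)'$. A crossing that goes the wrong way would require, at some $r_0$, $\Phi'/\Phi\ge\Psi'/\Psi$ on the wrong side. We rule this out explicitly using the Riccati equation for $h=f'/f$, namely
\[
h'=-h^2-(n-1)\coth r\,h+\frac{n-1}{\sinh^2 r},
\]
and an explicit bound such as $h(r)\le$ a computable elementary function, checked uniformly for $n\ge3$. Establishing this slope comparison is the step we expect to be hardest. We would first try to show that $h\sinh r\cosh r$, or a similar combination, stays below its value at the putative crossing, via a maximum principle argument on the Riccati ODE.

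\textbf{Rigidity.} Equality forces equality in the Weinberger step, which gives $\Omega=B(R)$ up to a null set, or equality in the crossing step, which forces $r\equiv R$ on $\partial\Omega$. In either case $\Omega$ is a geodesic ball.
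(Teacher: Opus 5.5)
\textbf{The reduction step fails.} With the constant extension $f\equiv f(R)$ for $r\ge R$ you have $f\le f(R)$ everywhere. Hence $\frac{1}{|\partial\Omega|}\int_{\partial\Omega}f^2\dd\mu\le f(R)^2$, with equality only if $r\ge R$ on all of $\partial\Omega$. For a domain containing the origin this gives $B(R)\subset\Omega$, and then $\Omega=B(R)$ by equal volumes. So the boundary estimate you reduce to is false for every non-ball.

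Dropping the truncation does not rescue the decoupled scheme. With $f=\varphi$ on all of $\mathbb{H}^n$, the density $\mathcal{E}$ is already decreasing, and Weinberger gives $\int_\Omega\mathcal{E}\dd v\le\varphi'(R)|B(R)|$. You would then need $\frac{1}{|\Sigma|}\int_\Sigma\varphi^2\dd\mu\ge\varphi(R)^2$. The flow estimate \eqref{eq:moment} only gives the lower bound $|\Omega|^2\eta(\mathcal{W}(\Omega))^2$, and this is at most $\varphi(R)^2$, because $\mathcal{W}(\Omega)\ge\mathcal{W}(B(R))$ and $\eta$ is decreasing.

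You have also misplaced the $n=3$ obstruction. Estimate \eqref{eq:moment} already holds for every $n\ge3$ with no circumradius bound (Remark~\ref{rem:dimrange}). So putting a crossing argument into $Q'(t)$ targets the wrong step. It would not close as described anyway. Splitting $\Sigma_t$ into $\{r<R\}$ and $\{r>R\}$ gives no sign for the integral of $\Phi$ minus your Minkowski-type comparison function, since nothing balances the two pieces of a hypersurface. Moreover, the quantity $h\sinh r\cosh r=\lambda\lambda'\varphi'/\varphi$ you propose to control is exactly the one that is unbounded in $\mathbb{H}^3$.

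\textbf{The missing idea} is to couple the volume and boundary terms instead of comparing each with the ball separately. Keep the defect $\mathcal{W}-\mathcal{W}_0\ge0$ and linearize it using the convexity of $\eta^2$ and its tangent at $\mathcal{W}_0$:
$|B(R)|\,|S(R)|^2\,\eta(\mathcal{W})^2\ge|B(R)|-2(n-1)(\mathcal{W}-\mathcal{W}_0)$.
What must then be shown is $\int_\Omega\Psi_R\dd v\le\int_{B(R)}\Psi_R\dd v$ for the combined density $\Psi_R=\mathcal{E}+2(n-1)\varphi'(R)w$.

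This density is not monotone when $n=3$, but it crosses its value at $R$ exactly once. Two ingredients give this:
the identity $\Psi_R(r)-\Psi_R(R)=(\varphi'(r)-\varphi'(R))^2+q(r)^2-q(R)^2$, with $q=\sqrt{n-1}\,\varphi/\lambda$; and the slope bound $|\varphi''|<|q'|$, which follows from the explicit bound $\varphi'\le p$ of Lemma~\ref{lem:slope}.
Together they show that $\Psi_R-\Psi_R(R)$ changes sign exactly once, at $r=R$. One crossing suffices because $|\Omega\setminus B(R)|=|B(R)\setminus\Omega|$ supplies the balance a bathtub argument needs. So the one-crossing must live in the transplantation step on volumes, not on the evolving hypersurfaces. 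Rigidity then comes from equality in that volume comparison.
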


Inequality~\eqref{eq:main} follows from~\eqref{eq:main-product} by the isoperimetric inequality and the monotonicity of $t\mapsto|S(t)|\,\sigma_1(B(t))$. Therefore, Theorem~\ref{thm:starshaped} establishes Theorem~\ref{thm:main}.

\begin{remark}\label{rem:nonsmooth}
The smoothness hypothesis in Theorem~\ref{thm:main} and Theorem~\ref{thm:starshaped} is used only to apply the inverse mean curvature flow in Theorem~\ref{thm:moment}, which requires a $C^{2}$ boundary. Both theorems extend to arbitrary bounded convex domains with Lipschitz boundary by approximation with smooth convex domains, the rigidity statement being unchanged. Therefore, Conjecture~\ref{conj:main} holds in all dimensions.
\end{remark}

In particular, we close the sole remaining open dimension.
\begin{corollary}\label{cor:H3}
Conjecture~\ref{conj:main} holds in $\mathbb{H}^{3}$, without any restriction on the circumradius.
\end{corollary}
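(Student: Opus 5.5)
The corollary is an immediate specialization of Theorem~\ref{thm:main}, combined with the approximation argument of Remark~\ref{rem:nonsmooth}. Since Theorem~\ref{thm:main} is stated for all $n\geq3$, I would take $n=3$. For a bounded convex domain $\Omega\subset\mathbb{H}^3$ with smooth boundary, the theorem then gives $\sigma_1(\Omega)\leq\sigma_1(\Omega^*)$, where $\Omega^*$ is the geodesic ball with the same boundary area, and equality holds exactly for geodesic balls. The point to stress is that no circumradius bound enters anywhere. Theorem~\ref{thm:main} is deduced from Theorem~\ref{thm:starshaped}, whose hypotheses are only star-shapedness and mean convexity of $\partial\Omega$, and convexity supplies both. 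The deduction uses two further facts: the hyperbolic isoperimetric inequality, which gives $|\partial\Omega|\geq|\partial B(R)|$ when $|B(R)|=|\Omega|$, and the monotonicity of $t\mapsto|S(t)|\,\sigma_1(B(t))$. Neither depends on the size of $\Omega$. This is precisely the contrast with \cite{GuLiWan2025}*{Proposition~4.9}, whose circumradius restriction is removed here.

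Concretely, the chain is as follows. Let $R'$ be the radius with $|\partial B(R')|=|\partial\Omega|$, so that $R'\geq R$. Then
\[
|\partial\Omega|\,\sigma_1(\Omega)\leq|\partial B(R)|\,\sigma_1(B(R))\leq|\partial B(R')|\,\sigma_1(B(R'))=|\partial\Omega|\,\sigma_1(B(R')).
\]
Dividing by $|\partial\Omega|$ gives \eqref{eq:main}. The monotonicity is used here in the nondecreasing direction, which I would double-check against the paper's later lemma; for $n=3$ it should follow from the explicit radial first eigenfunction on $B(t)$. If equality holds, then equality holds in \eqref{eq:main-product}, and the rigidity part of Theorem~\ref{thm:starshaped} forces $\Omega$ to be a ball.

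For nonsmooth convex $\Omega$, I would follow Remark~\ref{rem:nonsmooth}. Approximate $\Omega$ by smooth convex domains $\Omega_k\to\Omega$ in Hausdorff distance; for instance, take level sets of a mollified convex defining function, or inner parallel sets smoothed appropriately. Convexity then gives $|\partial\Omega_k|\to|\partial\Omega|$ and uniform Lipschitz control of the boundaries, which in turn gives $\sigma_1(\Omega_k)\to\sigma_1(\Omega)$.

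The main obstacle is this continuity of $\sigma_1$ under the approximation, together with retaining rigidity in the limit. Continuity can be handled with uniform extension operators for uniformly Lipschitz domains and convergence of the boundary traces. Rigidity cannot simply be passed to the limit, since strict inequalities may degenerate. For it, I would instead run the equality analysis of Theorem~\ref{thm:starshaped} directly in the Lipschitz setting. Alternatively, and more simply, I would rely on the statement in Remark~\ref{rem:nonsmooth} that rigidity is unchanged, since the paper asserts it and it is available to cite. With these pieces, Conjecture~\ref{conj:main} holds in $\mathbb{H}^3$ with no circumradius restriction.
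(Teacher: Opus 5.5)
Your proposal is correct and follows the paper's route. The corollary is Theorem~\ref{thm:main} at $n=3$, which the paper derives from Theorem~\ref{thm:starshaped} via Theorem~\ref{thm:isop} and the monotonicity stated in Proposition~\ref{prop:ball}, exactly as in your chain (your $R,R'$ are the paper's $\rho,R$), combined with the approximation of Remark~\ref{rem:nonsmooth} for nonsmooth convex domains.

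Your caution about rigidity is well placed, and your first option is the one that works. Inequality \eqref{eq:moment} and Proposition~\ref{prop:rayleigh} survive the approximation as non-strict inequalities. The only strictness used in the equality argument comes from \eqref{eq:shell_inequality}, which needs no boundary regularity.
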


As a further consequence of the proof, we obtain an inequality for the harmonic mean of the first $n-1$ Steklov eigenvalues for star-shaped mean convex domains. See Proposition~\ref{prop:harmonic} in Section~\ref{sec:harmonic}.

\begin{remark}\label{rem:idea}
The case $n=3$ was previously inaccessible because the mass‑transplantation argument of \cite{GuLiWan2025} requires a certain radial density to be globally decreasing, a property that fails in $\mathbb{H}^{3}$. We circumvent this by observing that only a one‑crossing property at the comparison radius is needed. The crossing property is established in Lemmas~\ref{lem:slope}
and~\ref{lem:crossing} without any dimensional restriction.
\end{remark}

This paper is organized as follows. Section~\ref{sec:prelim} fixes notation and collects the facts we quote. Section~\ref{sec:radial} contains the two new radial lemmas. In Section~\ref{sec:proof}, we prove Theorems~\ref{thm:main} and~\ref{thm:starshaped}. Section~\ref{sec:harmonic} presents a harmonic mean inequality.

\section{Preliminaries}\label{sec:prelim}

Fix a point $O\in\mathbb{H}^{n}$ and write $r=\dist(O,\cdot)$. In geodesic polar coordinates about $O$,
\begin{equation*} 
  g_{\mathbb{H}^{n}}=\dd r^{2}+\lambda(r)^{2}g_{\mathbb{S}^{n-1}},
  \qquad
  \lambda(r)=\sinh r,
\end{equation*}
so that $\mathbb{H}^{n}=[0,\infty)\times_{\lambda}\mathbb{S}^{n-1}$. Set $\lambda'=\cosh r$ and note the identities $\lambda''=\lambda$ and $(\lambda')^{2}-\lambda^{2}=1$,
used constantly below. Denote the geodesic ball and sphere of radius $r$ centered at $O$ by $B(r)$ and $S(r)=\partial B(r)$, and let $\omega_{n-1}=|\mathbb{S}^{n-1}|$. Then
\begin{equation*} 
  |B(r)|=\omega_{n-1}\int_{0}^{r}\lambda(t)^{n-1}\dd t,
  \qquad
  |S(r)|=\omega_{n-1}\lambda(r)^{n-1}.
\end{equation*}
Throughout, $\Omega$ is a bounded domain with boundary $\Sigma=\partial\Omega$, $\dd v$ the volume element and $\dd\mu$ the induced boundary measure.

The volume-to-perimeter ratio of geodesic balls plays a central role:
\begin{equation*} 
\varphi(r):=\frac{|B(r)|}{|S(r)|}
=\frac{1}{\lambda(r)^{n-1}}\int_{0}^{r}\lambda(t)^{n-1}\dd t.
\end{equation*}
Differentiating directly gives
\begin{align}
\varphi'&=1-(n-1)\frac{\lambda'}{\lambda}\varphi, \label{eq:phi1}\\
\varphi''&=(n-1)\frac{\varphi}{\lambda^{2}}
             -(n-1)\frac{\lambda'}{\lambda}\varphi'=\frac{1-(1+(n-1)(\lambda')^{2})\varphi'}{\lambda\lambda'}. \label{eq:phi2}
\end{align}

By separation of variables, the first Steklov eigenvalue and eigenfunctions of geodesic balls can be expressed in terms of $\varphi$; see \cites{BinoySanthanam2014,GuLiWan2025} for details.
\begin{proposition}\label{prop:ball}
For every $r>0$,
\begin{equation}\label{eq:ballsigma}
  \sigma_1(B(r))=\frac{\varphi'(r)}{\varphi(r)}=(\log\varphi)'(r),
\end{equation}
with multiplicity $n$ and eigenfunctions $\varphi(r)\theta_{i}$, where $\theta_1,\dots,\theta_{n}$ are the restrictions to $\mathbb{S}^{n-1}$ of the Euclidean coordinates. Moreover,
$r\mapsto\sigma_1(B(r))$ is strictly decreasing, while
\begin{equation*} 
  r\longmapsto|S(r)|\,\sigma_1(B(r))
\end{equation*}
is nondecreasing. It is constant when $n=2$ and strictly increasing when $n\geq3$.
\end{proposition}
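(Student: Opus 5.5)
We would prove the proposition by separation of variables on $B(r)$, and then obtain both monotonicity statements by reducing each to a quadratic inequality in $\varphi$, checked with a barrier (ODE comparison) argument.

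\emph{Step 1: eigenvalue and eigenfunctions.} Expand a harmonic function on $B(r)$ in spherical harmonics. Let $Y_k$ be a spherical harmonic of degree $k$ on $\mathbb{S}^{n-1}$, with eigenvalue $\mu_k=k(k+n-2)$. Then $u=f(t)Y_k(\theta)$ is harmonic exactly when
\[
f''+(n-1)\frac{\lambda'}{\lambda}f'-\frac{\mu_k}{\lambda^2}f=0,
\]
with $f$ regular at $t=0$. The associated Steklov eigenvalue is $q_k(r)=f_k'(r)/f_k(r)$. By completeness of spherical harmonics, these values exhaust the Steklov spectrum of $B(r)$, and $k=0$ gives the constants with eigenvalue $0$.

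To order the modes, we use the Rayleigh quotient restricted to the mode $k$. It equals $\int_0^r\big(f'^2+\mu_k f^2/\lambda^2\big)\lambda^{n-1}\dd t$ divided by $f(r)^2\lambda(r)^{n-1}$, and its minimizer is $f_k$. Since $\mu_k$ is strictly increasing in $k$, so is $q_k$. Hence $\sigma_1=q_1$, with multiplicity $\dim\{\text{degree-1 harmonics}\}=n$ and eigenfunctions spanned by $f_1\theta_i$.

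Next we identify $f_1$. Equation \eqref{eq:phi2} says precisely that $\varphi''=(n-1)\varphi/\lambda^2-(n-1)(\lambda'/\lambda)\varphi'$, which is the $k=1$ equation above. Also $\varphi(t)\sim t/n$ as $t\to0$, so $\varphi$ is the regular solution. Therefore $f_1=\varphi$ up to scaling, and $\sigma_1(B(r))=\varphi'(r)/\varphi(r)$.

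\emph{Step 2: monotonicity, reduced to quadratics in $\varphi$.} By \eqref{eq:phi1}, $\sigma_1=1/\varphi-(n-1)\lambda'/\lambda$. Differentiating and using \eqref{eq:phi1} again, the condition $\sigma_1'<0$ becomes
\[
(n-1)\varphi^2+(n-1)\lambda\lambda'\varphi-\lambda^2<0,
\]
that is, $\varphi$ lies below the positive root of this quadratic.

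For the second claim, $|S|\sigma_1/\omega_{n-1}=\lambda^{n-1}/\varphi-(n-1)\lambda^{n-2}\lambda'$. Differentiating, using \eqref{eq:phi1}, multiplying by $\varphi^2\lambda^{3-n}$ and using $\lambda'^2-\lambda^2=1$, the sign of $(|S|\sigma_1)'$ is the sign of
\[
-Q(\varphi),\qquad Q(\varphi)=(n-1)\big((n-2)\lambda'^2+\lambda^2\big)\varphi^2-2(n-1)\lambda\lambda'\varphi+\lambda^2 .
\]
The quarter-discriminant of $Q$ is $(n-1)\lambda^2(\lambda'^2-\lambda^2)=(n-1)\lambda^2>0$. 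So there are two real roots
\[
\varphi_\pm=\frac{\lambda\big((n-1)\lambda'\pm\sqrt{n-1}\big)}{(n-1)\big((n-2)\lambda'^2+\lambda^2\big)},
\]
and strict increase is equivalent to $\varphi_-<\varphi<\varphi_+$ for all $r>0$.

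When $n=2$ we have $\varphi=(\lambda'-1)/\lambda$. One checks directly that this is exactly the root $\varphi_-$, so $Q(\varphi)\equiv0$ and $|S|\sigma_1$ is constant.

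\emph{Step 3: barriers (the main obstacle).} We must show that $\varphi$ stays strictly between the two root curves for $n\ge3$, and strictly below the positive root of the first quadratic. Near $0$ this holds. For example, for $n=3$ the roots behave like $\tfrac{2\pm\sqrt2}{4}\,r$, and $\varphi\sim r/3$ lies strictly between them.

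We would then run a first-touching argument. Suppose $\varphi$ first meets a root curve $\psi$ at some $r_0>0$. Since $\varphi$ solves the first-order ODE \eqref{eq:phi1}, it is enough to show that $\psi$ is a strict super-solution (for the upper barrier) or a strict sub-solution (for the lower barrier). That is, we need
\[
\psi'>1-(n-1)\frac{\lambda'}{\lambda}\psi \quad\text{for } \psi=\varphi_+,
\]
and the reverse inequality for $\psi=\varphi_-$. This contradicts the touching at $r_0$.

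Verifying these differential inequalities is an explicit but messy computation in $\lambda,\lambda'$. The key simplification is to use $\lambda'^2=1+\lambda^2$ to reduce everything to one variable, $c=\lambda'\ge1$. We expect the resulting polynomial inequality in $c$ to have a clear sign exactly when $n\ge3$, and to degenerate to equality when $n=2$, consistent with Step 2. This computation is where we expect the main difficulty. If it is too tight, we would fall back on the explicit integral formula for $\varphi$ to sharpen the bounds.
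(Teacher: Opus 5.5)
\textbf{Step 3 is not carried out, and it carries the whole second half of the proposition.} Your Step 1 is fine. It is the separation-of-variables derivation that the paper takes from Binoy--Santhanam and Gu--Li--Wan. There is one harmless slip: for $n=3$ the roots of $Q$ behave like $r/(2\mp\sqrt2)=\frac{2\pm\sqrt2}{2}\,r$, not $\frac{2\pm\sqrt2}{4}\,r$.

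In Step 3 you only predict that the barrier inequalities have the right sign, and you keep a fallback in case they do not. As written, neither strict monotonicity statement is proved. The method does work, and the computation is short once you factor $Q$. Since $(n-1)\big((n-2)(\lambda')^2+\lambda^2\big)=\big((n-1)\lambda'\big)^2-(n-1)$, the roots are
\[
\varphi_\pm=\frac{\lambda}{(n-1)\lambda'\mp\sqrt{n-1}}.
\]
Take a constant $a$ and set $\psi_a=\lambda/((n-1)\lambda'+a)$. Using $\lambda''=\lambda$, $(\lambda')^2-\lambda^2=1$ and $(\lambda^{n-1}\varphi)'=\lambda^{n-1}$, one finds
\[
\big(\lambda^{n-1}(\psi_a-\varphi)\big)'=\lambda^{n-1}\,\frac{(n-1)-a^2-(n-2)a\lambda'}{((n-1)\lambda'+a)^2}.
\]
For $a=\mp\sqrt{n-1}$ this equals $\pm(n-2)\sqrt{n-1}\,\lambda^{n-1}\lambda'/((n-1)\lambda'\mp\sqrt{n-1})^2$. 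Also $\lambda^{n-1}(\varphi_\pm-\varphi)\to0$ at $r=0$; for $n\ge3$ the denominator of $\varphi_+$ is at least $n-1-\sqrt{n-1}>0$.

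Integrating therefore gives $\varphi_-<\varphi<\varphi_+$ on $(0,\infty)$ for $n\ge3$, and $\varphi\equiv\varphi_-$ when $n=2$. This integrating-factor form also makes your small-$r$ asymptotics and first-touching argument unnecessary.

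For the decrease of $\sigma_1$, your positive-root barrier also works: its defect has the sign of $(n+3)\lambda'-\sqrt{(n-1)^2(\lambda')^2+4(n-1)}>0$. The barrier $\psi=\lambda\lambda'/A$ is quicker. It gives $\big(\lambda^{n-1}(\psi-\varphi)\big)'=2\lambda^{n+1}/A^2>0$, so $\varphi<\lambda\lambda'/A$. By \eqref{eq:phi1} this means $A\varphi'>1$, which is $\varphi''<0$ by \eqref{eq:phi2}. Then $(\varphi'/\varphi)'=(\varphi\varphi''-(\varphi')^2)/\varphi^2<0$.

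\textbf{Comparison with the paper.} The paper gives no proof of this proposition beyond the citation. Within the paper, both monotonicity statements follow at once from Lemma~\ref{lem:phimono}, which is also quoted from Gu--Li--Wan:
\begin{itemize}
\item concavity of $\varphi$ gives $(\varphi'/\varphi)'<0$;
\item $|S(r)|\,\sigma_1(B(r))=\omega_{n-1}\lambda^{n-3}\cdot\lambda^2\varphi'/\varphi$, which for $n\ge3$ is a nondecreasing positive factor times a strictly increasing one.
\end{itemize}
Once Step 3 is filled in as above, your route is self-contained. It replaces that quoted lemma with explicit two-sided bounds on $\varphi$, proved by the same integrating-factor device the paper uses for Lemma~\ref{lem:slope}.
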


For a bounded domain $\Omega$ with smooth boundary, the center-of-mass theorem \cites{AithalSanthanam1996,BinoySanthanam2014} supplies an origin $O\in\Omega$ such that the functions $\varphi(r)\theta_{i}$ have zero mean on $\partial\Omega$. These functions satisfy the mean-zero condition in \eqref{eq:rayleigh} and are therefore admissible as test functions. Summing over $i=1,\dots,n$ yields the following estimate.

\begin{proposition}\label{prop:rayleigh}
Let $\Omega\subset\mathbb{H}^{n}$ be a bounded domain with smooth boundary. Then
\begin{equation*} 
\sigma_1(\Omega)\int_{\partial\Omega}\varphi^{2}\dd\mu
\leq\int_{\Omega}\left ((\varphi')^{2}+(n-1)\frac{\varphi^{2}}{\lambda^{2}}\right ) \dd v.
\end{equation*}
\end{proposition}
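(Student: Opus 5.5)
The plan is to plug the $n$ test functions $u_i=\varphi(r)\theta_i$, $i=1,\dots,n$, into the Rayleigh quotient \eqref{eq:rayleigh} and sum over $i$. This is legitimate for the origin $O$ supplied by the center-of-mass theorem, because there each $u_i$ has zero mean on $\partial\Omega$. Then \eqref{eq:rayleigh} gives
\[
\sigma_1(\Omega)\int_{\partial\Omega}u_i^2\dd\mu\le\int_\Omega|\nabla u_i|^2\dd v
\]
for each $i$, and we sum over $i=1,\dots,n$.

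The left side is immediate. Since $\theta_1,\dots,\theta_n$ are the restrictions of the Euclidean coordinates to $\mathbb{S}^{n-1}$, we have $\sum_i\theta_i^2=1$. Hence $\sum_i u_i^2=\varphi^2$, and the left side becomes $\sigma_1(\Omega)\int_{\partial\Omega}\varphi^2\dd\mu$.

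For the right side, write the gradient in polar coordinates, $g=\dd r^2+\lambda^2 g_{\mathbb{S}^{n-1}}$:
\[
|\nabla u_i|^2=(\varphi')^2\theta_i^2+\frac{\varphi^2}{\lambda^2}\,|\nabla_{\mathbb{S}^{n-1}}\theta_i|^2_{\mathbb{S}^{n-1}}.
\]
Summing, the first term contributes $(\varphi')^2$ because $\sum_i\theta_i^2=1$. For the second term we need $\sum_i|\nabla_{\mathbb{S}^{n-1}}\theta_i|^2=n-1$. Here $\nabla_{\mathbb{S}^{n-1}}\theta_i$ is the tangential projection of $e_i$, so
\[
\sum_i|\nabla_{\mathbb{S}^{n-1}}\theta_i|^2=\sum_i\left(|e_i|^2-\theta_i^2\right)=n-1.
\]
Alternatively, since each $\theta_i$ is a first spherical harmonic, integrating by parts against $\Delta_{\mathbb{S}^{n-1}}\theta_i=-(n-1)\theta_i$ gives the same identity; the pointwise computation is cleaner.

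Summing therefore yields exactly the integrand $(\varphi')^2+(n-1)\varphi^2/\lambda^2$, and the inequality follows.

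The main technical point is regularity at $r=0$, where $\theta_i$ is not smooth. We have $\varphi(r)\sim r/n$ as $r\to0$, so $u_i$ behaves like $x_i/n$ in normal coordinates and is Lipschitz, hence in $H^1(\Omega)$. Likewise $\varphi^2/\lambda^2$ stays bounded near the origin, so all integrals converge. The only other input, the existence of a suitable center of mass $O\in\Omega$, is quoted from \cites{AithalSanthanam1996,BinoySanthanam2014}.
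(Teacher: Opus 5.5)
Your proposal is correct and follows the paper's route: you plug the test functions $\varphi(r)\theta_i$, centred at the point given by the center-of-mass theorem, into \eqref{eq:rayleigh} and sum over $i$. You also supply details the paper leaves implicit, namely the pointwise identity $\sum_i|\nabla(\varphi\theta_i)|^2=(\varphi')^2+(n-1)\varphi^2/\lambda^2$ and the Lipschitz/$H^1$ regularity of the test functions at the origin.
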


We collect the basic properties of $\varphi$ needed later, see \cite{GuLiWan2025}*{Section~3} for details.

\begin{lemma}\label{lem:phimono}
The function $\varphi$ is strictly increasing and strictly concave on $(0,\infty)$, with
\[
  \lim_{r\to0}\frac{\varphi}{\lambda}=\frac{1}{n},\qquad
  \lim_{r\to\infty}\varphi=\frac{1}{n-1},\qquad
  \lim_{r\to0}\varphi'=\frac{1}{n},\qquad
  \lim_{r\to\infty}\varphi'=0.
\]
Moreover, $\lambda^{2}\varphi'/\varphi$ is strictly increasing, and for every $r>0$,
\begin{equation*} 
0<\varphi'(r)<\frac{1}{n},\qquad \varphi''(r)<0.
\end{equation*}
\end{lemma}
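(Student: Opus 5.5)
The plan is to work directly from the definition $\varphi=\lambda^{1-n}\int_0^r\lambda^{n-1}$ and the identities \eqref{eq:phi1}--\eqref{eq:phi2}. Each sign statement will be reduced to the positivity of an auxiliary function that vanishes at $r=0$ and whose derivative has an evident sign. The limits come first, since they fix the initial values for these comparison arguments.

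\textbf{Limits.} As $r\to0$ we have $\lambda\sim r$ and $\int_0^r\lambda^{n-1}\sim r^n/n$, so $\varphi/\lambda\to1/n$. Also $\lambda'/\lambda\sim 1/r$, and \eqref{eq:phi1} gives $\varphi'\to1-(n-1)/n=1/n$. As $r\to\infty$, L'H\^opital applied to $\int_0^r\lambda^{n-1}\big/\lambda^{n-1}$ gives $\varphi\to\lim \lambda/((n-1)\lambda')=1/(n-1)$. Moreover $\lambda'/\lambda=1+O(e^{-2r})$, and a second-order expansion of $\varphi$ (equivalently $\varphi=\tfrac{1}{n-1}+O(e^{-2r})$) inserted into \eqref{eq:phi1} gives $\varphi'\to0$.

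\textbf{Monotonicity.} By \eqref{eq:phi1}, $\varphi'>0$ is equivalent to $h(r):=\lambda^n-(n-1)\lambda'\int_0^r\lambda^{n-1}>0$.
\begin{itemize}
\item Using $\lambda''=\lambda$ one computes $h'=\lambda\,k$ with $k:=\lambda^{n-2}\lambda'-(n-1)\int_0^r\lambda^{n-1}$.
\item Using $(\lambda')^2-\lambda^2=1$ one gets $k'=(n-2)\lambda^{n-3}\big((\lambda')^2-\lambda^2\big)=(n-2)\lambda^{n-3}\ge0$.
\item Since $k(0)\ge0$ (it equals $1$ if $n=2$ and $0$ if $n\ge3$) and $k$ is not identically zero, we get $k>0$ on $(0,\infty)$. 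Then $h(0)=0$ gives $h>0$, hence $\varphi'>0$.
\end{itemize}

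\textbf{Concavity.} By the second form of \eqref{eq:phi2}, $\varphi''<0$ is equivalent to $(1+(n-1)(\lambda')^2)\varphi'>1$. Substituting \eqref{eq:phi1}, this becomes
\[
\int_0^r\lambda^{n-1}<\frac{\lambda^{n}\lambda'}{1+(n-1)(\lambda')^2}.
\]
I will set $D(r)$ to be the right-hand side minus the left-hand side and note $D(0)=0$. The aim is to show $D'>0$ by reducing $D'$, via the two hyperbolic identities, to a rational expression in $c=\lambda'\ge1$ times a power of $\lambda$. I expect this algebra to be the main obstacle. If $D'$ turns out not to be manifestly positive, the fallback is to iterate: differentiate once more after dividing by a suitable positive factor, as was done for $h$ and $k$. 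Once $\varphi''<0$ is established, $\varphi'$ is strictly decreasing from its limit $1/n$ at $0$, which gives $\varphi'<1/n$.

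\textbf{The quotient $\lambda^2\varphi'/\varphi$.} Put $u=\lambda^2\varphi'/\varphi$. Using the first form of \eqref{eq:phi2} to eliminate $\varphi''$, and \eqref{eq:phi1} to write $\varphi'/\varphi=u/\lambda^2$, gives a first-order Riccati-type equation
\[
u'=(3-n)\frac{\lambda'}{\lambda}u+(n-1)-\frac{u^2}{\lambda^2}.
\]
Here the middle term uses $\varphi/(\lambda^2\varphi')=1/u$, so $(n-1)\varphi/(\lambda^2\varphi')\cdot u=n-1$. Near $r=0$, $u\sim\lambda\cdot(1/n)\cdot n\sim r$, so $u'>0$ initially.

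To show $u'>0$ everywhere, I will argue by contradiction at the first point $r_0$ where $u'(r_0)=0$. There $u$ is a root of the quadratic above, so $u(r_0)$ is determined explicitly. Differentiating the ODE at $r_0$ gives $u''(r_0)=\partial_r F(r_0,u(r_0))$, whose sign I will compute using $\lambda''=\lambda$. The goal is to show $u''(r_0)>0$, which contradicts $u'$ decreasing to $0$ from positive values.

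Alternatively, one can compare $u$ with the explicit barrier given by the positive root of $F(r,\cdot)=0$, using the bound $\varphi'<1/n$ together with $\varphi<1/(n-1)$ obtained above.
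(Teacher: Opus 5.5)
The paper does not prove this lemma; it only refers to \cite{GuLiWan2025}. Your proposal is a correct self-contained proof, and both computations you left open work out directly, so no fallback is needed.

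For concavity, write $c=\lambda'$ and use $\lambda^2=c^2-1$. Then $D'=\lambda^{n-1}A^{-2}\beta$ with $\beta=((n+1)c^2-1)A-2(n-1)c^2(c^2-1)-A^2$. The $c^4$ terms cancel and $\beta=2(c^2-1)$, so
\[
D'(r)=\frac{2\lambda^{n+1}}{A^{2}}>0,\qquad A=1+(n-1)(\lambda')^{2}.
\]
The reason it is so clean is that, by \eqref{eq:phi1}, $(n-1)D=\frac{\lambda^{n}}{\lambda'}\bigl(\varphi'-\frac1A\bigr)$. So your $D$ is exactly the integrating-factor quantity from the proof of Lemma~\ref{lem:slope}, run with the barrier $1/A$ in place of $p$ (for this barrier $1-A\cdot\frac1A=0$). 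Your concavity step and the paper's bound \eqref{eq:pbound} are therefore the two halves of one comparison, $\frac1A<\varphi'\le p$.

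For the quotient, you do not need to solve the quadratic. Indeed
\[
\partial_rF(r,u)=\frac{u}{\lambda^{2}}\Bigl((n-3)+\frac{2\lambda' u}{\lambda}\Bigr)>0\qquad(n\ge3,\ u>0),
\]
so $u''(r_0)>0$, contradicting $u'>0$ on $(0,r_0)$. For $n=2$ one has $\varphi=\tanh(r/2)$ and $u=\lambda$ outright.

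Minor points:
\begin{enumerate}
\item $u\sim r$ alone does not give $u'>0$ near $0$. Inserting $u/\lambda\to1$ into your ODE does: it gives $u'\to(3-n)+(n-1)-1=1$.
\item The second-order expansion at infinity is unnecessary, since $\lambda'/\lambda\to1$ and $(n-1)\varphi\to1$ already give $\varphi'\to0$ by \eqref{eq:phi1}. Also, the rate $O(e^{-2r})$ you quote fails for $n=3$, where $\varphi-\frac12\sim-2re^{-2r}$.
\item The $h,k$ step is redundant once $\varphi''<0$ and $\varphi'\to0$ at infinity are known, since a strictly decreasing function with limit $0$ is positive.
\end{enumerate}

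The citation buys the paper brevity. Your route makes the lemma checkable in a few lines from \eqref{eq:phi1}--\eqref{eq:phi2}, using the same device as Section~\ref{sec:radial}.
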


Define the energy density
\begin{equation*} 
  \mathcal{E}(r):=\varphi'(r)^{2}+(n-1)\frac{\varphi(r)^{2}}{\lambda(r)^{2}},
\end{equation*}
where $\varphi'(r)^{2}$ and $(n-1)\varphi(r)^2/\lambda(r)^2$ are, respectively, the radial and spherical parts of $\mathcal{E}(r)$. From \eqref{eq:phi2} one obtains
\begin{equation*} 
  \mathrm{div}\left (\varphi\varphi'\partial_{r}\right)
  =(\varphi')^{2}+(n-1)\frac{\varphi^{2}}{\lambda^{2}}=\mathcal{E},
\end{equation*}
and integrating over $B(R)$ gives
\begin{equation}\label{eq:E0}
  \int_{B(R)}\mathcal{E}\dd v
  =\varphi(R)\varphi'(R)\,|S(R)|
  =\varphi'(R)\,|B(R)|.
\end{equation}

We need two classical results. The first is the isoperimetric inequality in hyperbolic space, and the second is Weinberger's mass transplantation principle.

\begin{theorem}[{\cite{Schmidt1943}}]\label{thm:isop}
If $\Omega\subset\mathbb{H}^{n}$ is a bounded domain and $|\Omega|=|B(R)|$, then
\[
|\partial\Omega|\geq|S(R)|,
\]
with equality only for geodesic balls.
\end{theorem}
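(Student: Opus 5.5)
Since Theorem~\ref{thm:isop} is the classical result of Schmidt, we only outline how one would reprove it. We use the direct method of the calculus of variations in the class of sets of finite perimeter. The proof then classifies minimizers through the symmetries of $\mathbb{H}^{n}$.

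\emph{Step 1 (existence).} Fix $V=|B(R)|$ and let $I(V)=\inf\{|\partial^{*}E| : |E|=V\}$, where $\partial^{*}E$ is the reduced boundary. Take a minimizing sequence $E_k$. Since $\mathbb{H}^{n}$ is homogeneous, we may translate each $E_k$ by isometries so that a definite fraction of its volume lies in a fixed compact set. This uses a local isoperimetric inequality on unit balls, which prevents the volume from spreading into small pieces.

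\emph{Step 2 (ruling out mass loss).} BV compactness gives an $L^{1}_{\mathrm{loc}}$ limit $E$ with $|\partial^{*}E|\le\liminf|\partial^{*}E_k|$. Volume may still escape to infinity. To exclude this, we use the strict subadditivity $I(V_1+V_2)<I(V_1)+I(V_2)$. This follows from the explicit comparison with balls, since $t\mapsto|S(t)|$ is strictly concave as a function of $|B(t)|$. Together with a concentration-compactness decomposition, it shows that $E$ is a minimizer of volume $V$.

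\emph{Step 3 (symmetry).} For every totally geodesic hyperplane $P$, we apply two-point symmetrization with respect to $P$, i.e.\ reflection across $P$. This is possible because reflections are isometries of $\mathbb{H}^{n}$. The operation preserves volume and does not increase perimeter. We first apply it with $P$ through an interior point and normal to a chosen direction, which makes the minimizer symmetric in that direction. Running the standard argument over all directions then shows that the minimizer is rotationally symmetric about a point $O$. Since it is connected and has minimal perimeter, it is the ball $B(R)$. This gives $|\partial\Omega|\ge I(V)=|S(R)|$.

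\emph{Step 4 (rigidity).} Suppose equality holds for $\Omega$. Then $\Omega$ is itself a minimizer. By the regularity theory of perimeter minimizers under a volume constraint, $\partial\Omega$ is smooth with constant mean curvature outside a closed singular set of dimension at most $n-8$. The rigidity then follows in one of two ways:
\begin{itemize}
\item Alexandrov's moving plane method in $\mathbb{H}^{n}$, using reflections across geodesic hyperplanes, forces $\partial\Omega$ to be a geodesic sphere.
\item Equality in every two-point symmetrization forces $\Omega$ to be symmetric with respect to a hyperplane in each direction. This in turn yields a common center and shows that $\Omega$ is a ball.
\end{itemize}

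We expect the main obstacle to be Step 2 together with the equality case in Step 4. Step 2 is needed because $\mathbb{H}^{n}$ is noncompact, so mass loss must be excluded. Step 4 requires handling the singular set in high dimensions. The cleanest route is the equality-in-symmetrization argument, which requires no regularity, so we would try that first.
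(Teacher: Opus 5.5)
The paper proves nothing here; it only quotes Schmidt. Your outline has a genuine gap at its core: Step~3 misuses two-point symmetrization, and the same error sinks the rigidity route you prefer in Step~4. Let $\tau$ be the reflection across $P$ and $H^{\pm}$ the two half-spaces. Polarization replaces $E$ by $E^{P}=\bigl((E\cup\tau E)\cap H^{+}\bigr)\cup\bigl((E\cap\tau E)\cap H^{-}\bigr)$. This never makes $E$ symmetric about $P$; for instance, a geodesic ball centred in $H^{-}$ is simply sent to its mirror image. Worse, polarization generically \emph{preserves} perimeter. Write $\operatorname{Per}(E;U)=\mathcal{H}^{n-1}(\partial^{*}E\cap U)$ and assume $\mathcal{H}^{n-1}(\partial^{*}E\cap P)=0$. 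Reflecting the $H^{-}$ part into $H^{+}$ gives $\operatorname{Per}(E^{P})=\operatorname{Per}(E\cup\tau E;H^{+})+\operatorname{Per}(E\cap\tau E;H^{+})$. This equals $\operatorname{Per}(E;H^{+})+\operatorname{Per}(\tau E;H^{+})=\operatorname{Per}(E)$ unless $\partial^{*}E$ and $\partial^{*}(\tau E)$ overlap with opposite normals on a set of positive $\mathcal{H}^{n-1}$-measure. For example, any strictly convex domain that is not a geodesic ball has equality for \emph{every} $P$. So polarizing a minimizer yields no symmetry, and "running the standard argument over all directions" has nothing to run on. The second bullet of Step~4 is false for the same reason: equality in every two-point symmetrization carries no rigidity information. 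The regularity-free route you planned to try first therefore does not exist.

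Two approaches do work. For the inequality alone, a suitable sequence of polarizations of $\Omega$ converges in $L^{1}$ to a geodesic ball of the same volume; this works because reflections of $\mathbb{H}^{n}$ are isometries. Lower semicontinuity of perimeter then gives $|\partial\Omega|\ge|S(R)|$ with no existence theory, though it gives no equality case. Symmetry of minimizers, and hence rigidity, needs regularity. One standard argument is to take $P$ bisecting $|E|$. The doubled halves $(E\cap H^{\pm})\cup\tau(E\cap H^{\pm})$ both have volume $V$ and total perimeter at most $2\operatorname{Per}(E)$, so both are minimizers. Modulo a connectedness argument, unique continuation for the analytic CMC part of $\partial E$ then forces $E=\tau E$. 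The unique Riemannian barycentre lies on every such symmetry hyperplane, one for each direction, which gives rotational symmetry. Alternatively, since $E^{P}$ is again a minimizer, its regularity excludes the corners created where $\partial E$ and $\tau\,\partial E$ cross. Pushing this through gives $E^{P}\in\{E,\tau E\}$ for all $P$, a property that characterises balls.

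Step~2 is also circular. You derive strict subadditivity of $I$ from concavity of $V\mapsto|S(r_V)|$, with $|B(r_V)|=V$, but this presupposes $I(V)=|S(r_V)|$, which is the theorem itself. Comparison with balls only gives $I(V)\le|S(r_V)|$, the wrong direction for bounding $I(V_1)+I(V_2)$ from below. Existence should instead come from Morgan's theorem for manifolds with cocompact isometry group, where escaping pieces are translated back. The connectedness of the minimizer asserted in Step~3 then needs its own proof.
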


\begin{theorem}[{\cite{Weinberger1956}}; see also {\cite{FreitasLaugesen2021}}]
\label{thm:transplant}
Let $\Omega\subset\mathbb{H}^{n}$ be bounded with $|\Omega|=|B(R)|$ and let $f$ be radial and nonincreasing. Then
\[
\int_{\Omega}f\dd v\leq\int_{B(R)}f\dd v,
\]
and the inequality reverses for nondecreasing $f$. Equality for strictly monotone $f$ forces $|\Omega\,\triangle\,B(R)|=0$.
\end{theorem}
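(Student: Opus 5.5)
The plan is the classical "swap the symmetric difference" argument, which needs only $|\Omega|=|B(R)|$ and the radial monotonicity of $f$. Write $f(x)=F(r(x))$ with $F$ nonincreasing on $[0,\infty)$. Assume $f$ is integrable on $\Omega\cup B(R)$; this holds for bounded $f$ and for the radial densities used later. Decompose
\[
\Omega=(\Omega\cap B(R))\cup(\Omega\setminus B(R)),\qquad B(R)=(\Omega\cap B(R))\cup(B(R)\setminus\Omega).
\]
Since $|\Omega|=|B(R)|$, subtracting the common part gives $|\Omega\setminus B(R)|=|B(R)\setminus\Omega|$. Hence it suffices to prove
\[
\int_{\Omega\setminus B(R)}f\dd v\leq\int_{B(R)\setminus\Omega}f\dd v .
\]

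The key step is to compare both sides with the constant $F(R)$. On $\Omega\setminus B(R)$ we have $r\geq R$, so $f\leq F(R)$. On $B(R)\setminus\Omega$ we have $r<R$, so $f\geq F(R)$. Therefore
\[
\int_{\Omega\setminus B(R)}f\dd v\leq F(R)\,|\Omega\setminus B(R)|=F(R)\,|B(R)\setminus\Omega|\leq\int_{B(R)\setminus\Omega}f\dd v,
\]
which gives the inequality. For nondecreasing $f$, apply this to $-f$, which reverses the inequality.

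For the equality case, suppose $F$ is strictly decreasing and equality holds. Then both inequalities above are equalities. In particular $\int_{\Omega\setminus B(R)}(F(R)-f)\dd v=0$. The integrand is nonnegative, and it is strictly positive wherever $r>R$. The sphere $S(R)$ has measure zero, so $|\Omega\setminus B(R)|=0$. Hence also $|B(R)\setminus\Omega|=0$, that is, $|\Omega\triangle B(R)|=0$. The strictly increasing case is symmetric.

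There is no genuine obstacle. The only points needing care are the integrability of $f$ (near $O$ if $f$ is singular there, and on $\Omega\setminus B(R)$) and the measure-zero role of $S(R)$ in the rigidity statement.
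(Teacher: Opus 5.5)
Your argument is correct and complete. The paper does not prove this statement itself; it quotes it from Weinberger and Freitas--Laugesen. Your comparison of both halves of the symmetric difference with the constant $F(R)$ is the standard proof, and it is exactly the mechanism the paper reuses in \eqref{eq:shell_inequality}. As your proof makes visible, that step needs only the sign change of $f-F(R)$ across $r=R$ (the one-crossing property of Lemma~\ref{lem:crossing}), not monotonicity of $f$.
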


The next ingredient is a weighted volume in the inverse mean curvature flow. Set
\begin{equation}\label{eq:wdef}
  w(r):=\frac{\lambda'(r)}{\lambda(r)}\varphi(r)
       =\frac{1-\varphi'(r)}{n-1},
  \qquad
  \mathcal{W}(\Omega):=\int_{\Omega}w\dd v,
\end{equation}
where the second equality follows from \eqref{eq:phi1}. Note that $w = \frac{\varphi\,H_{S(r)}}{n-1}$ with $H_{S(r)} = (n-1)\frac{\lambda'}{\lambda}$ the mean curvature of the centred geodesic sphere. By Lemma~\ref{lem:phimono}, $w$ increases strictly from $\tfrac{1}{n}$ to $\tfrac{1}{n-1}$. Hence $\mathcal{W}(B(t))$ increases from $0$ to $\infty$ and we may define $\eta:(0,\infty)\to(0,\infty)$ by
\begin{equation}\label{eq:etadef}
  \eta\left (\mathcal{W}(B(t))\right )=\frac{1}{|S(t)|},
  \qquad t>0.
\end{equation}

\begin{lemma}\label{lem:eta}
The function $\eta$ is smooth, decreasing and strictly log-convex, with
\begin{equation}\label{eq:etalog}
  (\log\eta)'\left (\mathcal{W}(B(t))\right )=-\frac{n-1}{|B(t)|}.
\end{equation}
Consequently $\eta^{2}$ is strictly convex, and at $\mathcal{W}_{0}:= \mathcal{W}(B(R))$,
\begin{equation}\label{eq:etatangentdata}
  \eta(\mathcal{W}_{0})=\frac{1}{|S(R)|},
  \qquad
  (\eta^{2})'(\mathcal{W}_{0})
  =-\frac{2(n-1)}{|B(R)|\,|S(R)|^{2}}.
\end{equation}
\end{lemma}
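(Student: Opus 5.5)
The plan is to parametrize $\eta$ through $t$ and differentiate with the chain rule. Set $W(t):=\mathcal{W}(B(t))=\omega_{n-1}\int_0^t w(s)\lambda(s)^{n-1}\dd s$. Then
\[
W'(t)=w(t)\,|S(t)|=\frac{\lambda'(t)}{\lambda(t)}\varphi(t)\,|S(t)|=\frac{\lambda'(t)}{\lambda(t)}\,|B(t)|,
\]
which is smooth and strictly positive for $t>0$, because $w>0$ by Lemma~\ref{lem:phimono}. By the inverse function theorem $t\mapsto W(t)$ is a smooth diffeomorphism of $(0,\infty)$ onto $(0,\infty)$; surjectivity was noted before \eqref{eq:etadef}. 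Since $\eta=(1/|S|)\circ W^{-1}$, smoothness of $\eta$ follows.

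Next take logarithms in \eqref{eq:etadef}: $\log\eta(W(t))=-\log\omega_{n-1}-(n-1)\log\lambda(t)$. Differentiating in $t$ gives
\[
(\log\eta)'(W(t))\,W'(t)=-(n-1)\frac{\lambda'}{\lambda}.
\]
Dividing by $W'(t)=\frac{\lambda'}{\lambda}|B(t)|$ yields \eqref{eq:etalog} directly. This quantity is negative, so $\eta$ is strictly decreasing.

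For strict log-convexity, observe that $(\log\eta)'\circ W=-(n-1)/|B(t)|$ is a strictly increasing function of $t$, and $W$ is strictly increasing. Hence $(\log\eta)'$ is strictly increasing on $(0,\infty)$, which is exactly strict convexity of $\log\eta$. If a derivative formula is wanted, one can compute
\[
(\log\eta)''(W(t))=\frac{(n-1)|S(t)|}{|B(t)|^{2}W'(t)}>0.
\]

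Then $\eta^2=e^{2\log\eta}$ is the composition of the increasing convex function $e^{2x}$ with the strictly convex function $\log\eta$, so it is strictly convex. Explicitly,
\[
(\eta^2)''=\eta^2\bigl(4((\log\eta)')^2+2(\log\eta)''\bigr)>0.
\]
Finally, at $\mathcal{W}_0=W(R)$ we have $\eta(\mathcal{W}_0)=1/|S(R)|$ by definition, and
\[
(\eta^2)'(\mathcal{W}_0)=2\eta^2(\log\eta)'=-\frac{2(n-1)}{|S(R)|^2|B(R)|},
\]
which is \eqref{eq:etatangentdata}. The only delicate point is checking that $W'>0$ and that $W$ is onto $(0,\infty)$, and both follow from the bounds on $w$. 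The rest is routine chain-rule computation.
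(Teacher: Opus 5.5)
Your proof is correct and matches the paper's argument step for step: you differentiate \eqref{eq:etadef} in $t$ using $W'(t)=w(t)|S(t)|=\frac{\lambda'}{\lambda}|B(t)|$, deduce strict log-convexity because both $-(n-1)/|B(t)|$ and $W$ are strictly increasing, and obtain the tangent data from $(\eta^{2})'=2\eta^{2}(\log\eta)'$. Your inverse-function-theorem argument for smoothness and the explicit formula for $(\log\eta)''$ are small additions that the paper leaves implicit.
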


\begin{proof}
Differentiating \eqref{eq:etadef} in $t$ and using
\[
\frac{\dd}{\dd t}\mathcal{W}(B(t))=w(t)|S(t)|,\qquad
\frac{\dd}{\dd t}\log|S(t)|=(n-1)\frac{\lambda'}{\lambda},\qquad
w=\frac{\lambda'\varphi}{\lambda},
\]
we obtain
\[
 (\log\eta)'\left (\mathcal{W}(B(t))\right )
  =\frac{-(n-1)\frac{\lambda'}{\lambda}}{w(t)|S(t)|}
  =-\frac{n-1}{\varphi(t)|S(t)|}
  =-\frac{n-1}{|B(t)|},
\]
which is \eqref{eq:etalog}. The right-hand side increases strictly in $t$, and $\mathcal{W}(B(t))$ increases strictly in $t$, so $\log\eta$ is strictly convex. Writing $\eta^{2}=e^{2\log\eta}$ gives
\[
(\eta^{2})''=2\eta^{2}\left [(\log\eta)''+2((\log\eta)')^{2}\right ]>0.
\]
Finally, $(\eta^{2})'=2\eta^{2}(\log\eta)'$ together with \eqref{eq:etadef} and \eqref{eq:etalog} yields \eqref{eq:etatangentdata}.
\end{proof}

The following estimate, established by Gu--Li--Wan \cite{GuLiWan2025}, is the only place where the curvature flow enters.

\begin{theorem}[\cite{GuLiWan2025}*{Corollary~4.3}]\label{thm:moment}
Let $n\geq3$ and let $\Omega\subset\mathbb{H}^{n}$ be a smooth bounded domain with star-shaped mean convex boundary $\Sigma$. Then
\begin{equation}\label{eq:moment}
  \int_{\Sigma}\varphi^{2}\dd\mu
  \ \geq\ |\Sigma|\,|\Omega|^{2}\,\eta\left (\mathcal{W}(\Omega)\right)^{2}.
\end{equation}
\end{theorem}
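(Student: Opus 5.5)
The plan is to reduce \eqref{eq:moment} to a purely geometric inequality by Cauchy--Schwarz, and then to prove that inequality by a monotonicity argument along the inverse mean curvature flow (IMCF). The starting observation is that $\varphi\partial_r$ has unit divergence: by \eqref{eq:phi1},
\[
\mathrm{div}(\varphi\,\partial_r)=\varphi'+(n-1)\frac{\lambda'}{\lambda}\varphi=1 .
\]
Hence, with $u:=\langle\partial_r,\nu\rangle$ (positive on $\Sigma$ by star-shapedness), we get $\int_\Sigma\varphi\,u\dd\mu=|\Omega|$. Cauchy--Schwarz then gives
\[
|\Omega|^2\leq\int_\Sigma\varphi^2\dd\mu\int_\Sigma u^2\dd\mu .
\]
So \eqref{eq:moment} follows once we show
\begin{equation*}
|\Sigma|\,\eta(\mathcal{W}(\Omega))^{2}\int_\Sigma u^2\dd\mu\;\leq\;1 .
\end{equation*}
By \eqref{eq:etadef}, this says that $|\Sigma|\int_\Sigma u^2\dd\mu\leq|S(t)|^2$, where $t$ is defined by $\mathcal{W}(B(t))=\mathcal{W}(\Omega)$. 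Equality holds for centred spheres, where $u\equiv1$. If this form turns out not to be monotone, I would instead keep $\int_\Sigma\varphi^2\dd\mu$ itself in the monotone quantity and skip the Cauchy--Schwarz step.

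Next, run the IMCF $\partial_t X=H^{-1}\nu$ starting from $\Sigma$. By Gerhardt's theorem in hyperbolic space, star-shaped mean convex hypersurfaces stay star-shaped and mean convex, the flow exists for all time, and the rescaled hypersurfaces become asymptotically umbilic (though not necessarily round in the $\lambda$-scale). Along the flow we have $|\Sigma_t|=e^t|\Sigma|$ and
\[
\frac{\dd}{\dd t}\mathcal{W}(\Omega_t)=\int_{\Sigma_t}\frac{w}{H}\dd\mu ,
\]
and the evolution of $\eta(\mathcal{W})$ is governed by \eqref{eq:etalog}. I would consider
\[
Q(t):=\frac{\int_{\Sigma_t}\varphi^2\dd\mu}{|\Sigma_t|\,|\Omega_t|^2\,\eta(\mathcal{W}(\Omega_t))^2},
\]
or its $u^2$ analogue, and aim to prove $Q'(t)\leq0$ together with $\liminf_{t\to\infty}Q(t)\geq1$, which together give $Q(0)\geq1$.

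Computing $Q'$ requires the first variation of $\int\varphi^2\dd\mu$, namely
\[
\int_{\Sigma_t}\Bigl(\frac{2\varphi\varphi' u}{H}+\varphi^2\Bigr)\dd\mu ,
\]
together with $\frac{\dd}{\dd t}|\Omega_t|=\int_{\Sigma_t}H^{-1}\dd\mu$ and the $\mathcal{W}$-derivative above. To compare these terms I would use the Minkowski identity $\int_\Sigma\bigl(\lambda' - \tfrac{H}{n-1}\lambda u\bigr)\dd\mu=0$ and the Heintze--Karcher/Brendle inequality $\int_\Sigma \lambda'/H\dd\mu\geq\frac{n}{n-1}|\Omega|$-type bounds. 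The weight $w=\lambda'\varphi/\lambda$ is precisely the one that makes the $\mathcal{W}$-term match the $1/H$-terms via Heintze--Karcher.

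The main obstacle is this monotonicity step: the terms $\varphi\varphi' u/H$ and $w/H$ must be compared pointwise or in integral form, using $0<\varphi'<1/n$, the concavity of $\varphi$, and the monotonicity of $\lambda^2\varphi'/\varphi$ from Lemma~\ref{lem:phimono}, and it is not clear a priori that the signs close. The limit $t\to\infty$ is a secondary difficulty, because in $\mathbb{H}^n$ the limit hypersurfaces are not necessarily round. I expect it to be handled by the asymptotics $\varphi\to\frac1{n-1}$, $u\to1$, and the fact that $\mathcal{W}(\Omega_t)\sim\frac{1}{n-1}|\Omega_t|$, so that $\eta(\mathcal{W}(\Omega_t))\,|\Omega_t|\,|\Sigma_t|$-ratios tend to their values on large balls. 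The restriction $n\geq3$ presumably enters through these asymptotic comparisons.
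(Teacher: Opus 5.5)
There is a genuine gap: your primary reduction leads to a false inequality, and the flow monotonicity that would have to replace it is never proved.

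\textbf{The reduction fails.} Pairing $\varphi$ with $u$ in Cauchy--Schwarz leaves you needing
\[
|\Sigma|\,\eta(\mathcal{W}(\Omega))^{2}\int_{\Sigma}u^{2}\dd\mu\leq 1 .
\]
This is false in general, so no flow argument can prove it.

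For domains inside a small ball about $O$ one has $w=\frac1n+O(r^2)$. Hence $\eta(\mathcal{W}(\Omega))\approx|S(t)|^{-1}$ with $|B(t)|=|\Omega|$, and to leading order the condition becomes the scale-invariant Euclidean inequality $|\Sigma|\int_\Sigma u^2\leq|S(t)|^2$.

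Now take a smoothed thin convex disk of radius $L$ and thickness $2\epsilon$ centred at $O$; being convex, it is star-shaped and mean convex.
\begin{enumerate}
\item Its rounded rim has area $\sim L^{n-2}\epsilon$, and $u\geq\frac12$ on a fixed fraction of it, so $\int_\Sigma u^2\gtrsim L^{n-2}\epsilon$.
\item Also $|\Sigma|\sim L^{n-1}$ and $|S(t)|^2\sim(L^{n-1}\epsilon)^{2(n-1)/n}$.
\item So the left side exceeds the right by a factor $\sim(L/\epsilon)^{(n-2)/n}\to\infty$ for $n\geq3$. For $n=3$ with a half-torus rim, this is $\approx 2\pi^3L^3\epsilon$ against $16\pi^2(\tfrac32L^2\epsilon)^{4/3}$.
\end{enumerate}
Inequality \eqref{eq:moment} itself holds for these disks with room to spare. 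All the loss sits in $|\Omega|^2\leq\int_\Sigma\varphi^2\int_\Sigma u^2$. That step is far from sharp here because $u$ is tiny on the two faces, which carry most of $\int_\Sigma\varphi^2$.

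\textbf{The fallback is the whole theorem, and it is left open.} Proving directly that your quadratic $Q(t)$ is nonincreasing is the entire content of the statement. You give no argument that it is, and $Q$ is not the quantity known to be monotone.

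The argument the paper quotes, \cite{GuLiWan2025}*{Proposition~4.2}, runs the flow at the level of the first moment instead.
\begin{itemize}
\item The quantity $t\mapsto\int_{\Sigma_t}\varphi\dd\mu_t\big/\bigl(|\Sigma_t|\,|\Omega_t|\,\eta(\mathcal{W}(\Omega_t))\bigr)$ is nonincreasing along the IMCF, with limit at least $1$.
\item This yields $\int_\Sigma\varphi\dd\mu\geq|\Sigma|\,|\Omega|\,\eta(\mathcal{W}(\Omega))$.
\item Cauchy--Schwarz is then applied against the constant function: $\int_\Sigma\varphi^2\dd\mu\geq|\Sigma|^{-1}\bigl(\int_\Sigma\varphi\dd\mu\bigr)^2$. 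This produces exactly the factor $|\Sigma|$ in \eqref{eq:moment} and is an equality on centred spheres.
\end{itemize}
So two ingredients are missing from your proposal: pairing $\varphi$ with $1$ rather than with $u$, and an actual proof of the monotonicity of the linear quantity.

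A minor point: $n\geq3$ enters through Gerhardt's IMCF theory, which needs the flowing hypersurfaces to have dimension $n-1\geq2$, not through the asymptotic comparisons.
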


Equality in \eqref{eq:moment} holds when $\Omega=B(R)$, by \eqref{eq:etadef}. Under the inverse mean curvature flow, $\Sigma$ evolves into a family of hypersurfaces $\Sigma_{t}$, and we denote by $\Omega_{t}$ the enclosed domain. Gu--Li--Wan proved that the quantity
\[
t\mapsto|\Sigma_{t}|^{-1}\frac{\int_{\Sigma_t}\varphi \dd\mu_t}{|\Omega_{t}|\eta(\mathcal{W}(\Omega_t))}
\]
is nonincreasing with limit at least $1$ \cite{GuLiWan2025}*{Proposition~4.2}. The inequality \eqref{eq:moment} then follows from the Cauchy--Schwarz inequality. We refer to \cite{GuLiWan2025}*{Corollary~4.3} for the proof.

\begin{remark}\label{rem:dimrange}
Although the main theorem of \cite{GuLiWan2025} is stated for $n\geq4$, the proof of \eqref{eq:moment} is valid for every $n\geq3$. It uses only the long-time existence, star-shapedness and asymptotics of the inverse mean curvature flow in $\mathbb{H}^{n}$ due to Gerhardt \cite{Gerhardt2011}, requiring the flowing hypersurface to have dimension $n-1\geq2$. The hypothesis $n\geq4$ is only used subsequently in \cite{GuLiWan2025} for the radial estimates, which we bypass using the methods in Section~\ref{sec:radial} below.
\end{remark}

\section{Crossing estimates}\label{sec:radial}
We now carry out the crossing argument outlined in Remark~\ref{rem:idea}. We rewrite the energy density as
\[
  \mathcal{E}=(\varphi')^{2}+q^{2},
\]
where
\begin{equation}\label{eq:pq}
  q:=\sqrt{n-1}\,\frac{\varphi}{\lambda}.
\end{equation}
Recall that $\varphi'(r)^{2}$ and $q(r)^2$ are the radial and spherical parts of $\mathcal{E}(r)$ respectively. Differentiating and using
\eqref{eq:phi1}--\eqref{eq:phi2} gives
\begin{equation}\label{eq:pqderiv}
  \varphi''=\frac{1-A\varphi'}{\lambda\lambda'},
  \qquad
  q'=\frac{n\varphi'-1}{\sqrt{n-1}\,\lambda},
\end{equation}
where 
\begin{equation*} 
  A=1+(n-1)(\lambda')^{2}=n+(n-1)\lambda^{2}.
\end{equation*}
Lemma~\ref{lem:phimono} implies that both $\varphi'$ and $q$ are positive and strictly decreasing on $(0,\infty)$, with $\varphi'\to0$ and $q\to0$ as $r\to\infty$.

The crossing condition in Lemma~\ref{lem:crossing} below will require a comparison between the slopes of $\varphi'$ and $q$. We begin with a sharp pointwise bound for $\varphi'$.

\begin{lemma}\label{lem:slope}
For all $n\geq 3$ and $r>0$,
\begin{equation}\label{eq:pbound}
  \varphi'(r)\leq p(r)
  :=\frac{n+2\lambda'}{n\left(A+2\lambda'\right)}.
\end{equation}
Consequently
\begin{equation}\label{eq:slope}
  0<\frac{\varphi''(r)}{q'(r)}\leq\frac{2\sqrt{n-1}}{n}<1,
\end{equation}
and in particular $|\varphi''|<|q'|$ pointwise.
\end{lemma}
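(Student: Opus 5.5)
The plan is to prove \eqref{eq:pbound} by an ODE comparison for $y:=\varphi'$. By \eqref{eq:pqderiv}, $y$ solves the linear first-order equation
\[
y'=\frac{1-Ay}{\lambda\lambda'},\qquad A=1+(n-1)(\lambda')^{2}.
\]
So I will show that $p$ is a supersolution, $p'\geq (1-Ap)/(\lambda\lambda')$, and then compare $y$ with $p$ using an integrating factor.

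\textbf{Step 1: $p$ is a supersolution.} All quantities are functions of $c:=\lambda'$, with $\lambda^{2}=c^{2}-1$ and $\frac{\dd}{\dd r}=\lambda\frac{\dd}{\dd c}$. A direct computation gives
\[
1-Ap=\frac{n(A+2c)-A(n+2c)}{n(A+2c)}=-\frac{2c(n-1)\lambda^{2}}{n(A+2c)},
\]
so the supersolution inequality reduces to
\[
\frac{\dd p}{\dd c}\geq-\frac{2(n-1)}{n(A+2c)}.
\]
Differentiating $p=(n+2c)/(n(A+2c))$, with $\frac{\dd A}{\dd c}=2(n-1)c$, and clearing denominators, this becomes
\[
n(A+2c)\geq(n+2c)\bigl((n-1)c+1\bigr).
\]
Expanding both sides, the difference is $(n-1)(n-2)\,c\,(c-1)$. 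This is $\geq0$ because $n\geq3$ and $c=\cosh r\geq1$. This is the only place $n\geq3$ enters. It is the main computational step, but it is routine algebra.

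\textbf{Step 2: comparison.} Set $d:=y-p$. Then $d'\leq -\frac{A}{\lambda\lambda'}\,d$. With $\mu(r):=\exp\int_{1}^{r}\frac{A}{\lambda\lambda'}$, the product $\mu d$ is nonincreasing. Near $r=0$ we have $A/(\lambda\lambda')\sim n/r$, so $\mu\sim C r^{n}\to0$. Both $y$ and $p$ are bounded: $y\to 1/n$ by Lemma~\ref{lem:phimono}, and $p(0)=1/n$. Hence $\mu d\to0$ as $r\to0$. Monotonicity then gives $\mu d\leq 0$, i.e.\ $\varphi'\leq p$ for all $r>0$. The only delicate point is the behaviour at $r=0$, and only boundedness of $d$ is needed there.

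\textbf{Step 3: slope ratio.} From \eqref{eq:pqderiv},
\[
\frac{\varphi''}{q'}=\frac{\sqrt{n-1}}{\lambda'}\cdot\frac{A\varphi'-1}{1-n\varphi'}.
\]
This ratio is positive because $\varphi''<0$ and $q'<0$ by Lemma~\ref{lem:phimono} (equivalently $1/A<\varphi'<1/n$). The map $y\mapsto (Ay-1)/(1-ny)$ is increasing on $(-\infty,1/n)$, since its derivative has numerator $A-n>0$. Therefore we may substitute $y=p$ as an upper bound. Using the identity above, $Ap-1=\frac{2c(n-1)\lambda^{2}}{n(A+2c)}$, and also $1-np=\frac{(n-1)\lambda^{2}}{A+2c}$. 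Hence the bound equals exactly
\[
\frac{\sqrt{n-1}}{c}\cdot\frac{2c}{n}=\frac{2\sqrt{n-1}}{n}.
\]
Finally, $2\sqrt{n-1}<n$ is equivalent to $(n-2)^{2}>0$, which holds for $n\geq3$. This gives \eqref{eq:slope} and $|\varphi''|<|q'|$.
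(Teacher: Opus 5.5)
Your proof is correct and follows essentially the same route as the paper. Both arguments show that $p$ is a supersolution of $y'=(1-Ay)/(\lambda\lambda')$; your defect $(n-1)(n-2)c(c-1)$ is the paper's. Both then compare with $\varphi'$ via the integrating factor $\mu\propto\lambda^{n}/\lambda'$, whose boundary term vanishes at $r=0$, and your monotonicity of $y\mapsto(Ay-1)/(1-ny)$ in Step 3 just repackages the paper's direct cross-multiplication. One small correction: Step 1 needs only $n\geq2$ (for $n=2$ one has $\varphi'=p=1/(1+\cosh r)$ exactly), so your remark that this is the only place $n\geq3$ enters is inaccurate; $n\geq3$ is genuinely used in Step 3, through $2\sqrt{n-1}<n$.
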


\begin{proof}
Lemma~\ref{lem:phimono} gives $0<\varphi'<\tfrac{1}{n}$ and $\varphi''<0$. From \eqref{eq:phi2}, $\varphi''<0$ is equivalent to $1-A\varphi'<0$. Hence, for $r>0$, we have $1-n\varphi'>0$ and $A\varphi'-1>0$. Together with \eqref{eq:pqderiv}, this yields
\[
  0<\frac{\varphi''}{q'}
  =\frac{\sqrt{n-1}(A\varphi'-1)}{\lambda'(1-n\varphi')}.
  \]
The inequality $\frac{\varphi''}{q'}\leq\frac{2\sqrt{n-1}}{n}$ is equivalent to $n(A\varphi'-1)\leq2\lambda'(1-n\varphi')$, i.e.\ to
\[\varphi'\left (nA+2n\lambda'\right )\leq n+2\lambda',\]
which is precisely the estimate \eqref{eq:pbound}. The bound $2\sqrt{n-1}/n<1$ is just $(n-2)^2>0$. Thus \eqref{eq:slope} follows from \eqref{eq:pbound}.

To prove \eqref{eq:pbound}, set $p$ as in the statement. A direct computation using $\lambda''=\lambda$ and $(\lambda')^{2}-\lambda^{2}=1$ gives
\[
  p\,'
  =-\frac{2(n-1)\lambda\left((\lambda')^{2}+n\lambda'+1\right)}
         {n\,(A+2\lambda')^{2}},
  \qquad
  \frac{1-Ap}{\lambda\lambda'}
  =-\frac{2(n-1)\lambda}{n\,(A+2\lambda')}.
\]
Hence
\begin{equation}\label{eq:defect}
  p\,'-\frac{1-Ap}{\lambda\lambda'}
  =\frac{2(n-1)(n-2)\,\lambda\lambda'\,(\lambda'-1)}{n\,(A+2\lambda')^{2}}>0
  \qquad\text{for }r>0.
\end{equation}
Now consider $p-\varphi'$. Using the expression for $\varphi''$ from
\eqref{eq:pqderiv}, we have
\[
  (p-\varphi')'
  =p\,'-\frac{1-A\varphi'}{\lambda\lambda'}
  =\left(p\,'-\frac{1-Ap}{\lambda\lambda'}\right)
   -\frac{A}{\lambda\lambda'}(p-\varphi').
\]
A direct calculation yields
\[
  \left(\frac{\lambda^{n}}{\lambda'}\right)' = \frac{n\lambda^{n-1}\lambda' - \lambda^{n+1}}{(\lambda')^2}=\frac{\lambda^n\big(n(\lambda')^2 - \lambda^2\big)}{\lambda(\lambda')^2} = \frac{A\lambda^n}{\lambda(\lambda')^2}.
\]
Multiplying by the integrating factor $\lambda^{n}/\lambda'$ and using \eqref{eq:defect}, we obtain
\[
  \left(\frac{\lambda^{n}}{\lambda'}(p-\varphi')\right)'
  =\frac{\lambda^{n}}{\lambda'}
    \left(p\,'-\frac{1-Ap}{\lambda\lambda'}\right)>0.
\]
As $r\downarrow0$, $\frac{\lambda^n}{\lambda'}\sim r^n\to0$ and $p(0)-\varphi'(0)=0$, so integrating from $0$ to $r$ gives $p(r)-\varphi'(r)>0$ for $r>0$, which is \eqref{eq:pbound}.
\end{proof}

With the slope comparison in hand, we turn to the one-crossing property. Fix $R>0$ and define the \emph{comparison density at level $R$},
\begin{equation*}
\Psi_{R}(r):=\mathcal{E}(r)+2(n-1)\,\varphi'(R)\,w(r),\qquad r>0.
\end{equation*}
The coefficient $2(n-1)$ arises naturally from the tangent line to $\eta^{2}$ in \eqref{eq:etatangentdata}, as shown in the proof of Lemma~\ref{lem:eta}.

\begin{lemma}\label{lem:crossing}
Let $n\geq3$ and $R>0$. Then
\begin{equation}\label{eq:Psiidentity}
  \Psi_{R}(r)-\Psi_{R}(R)
  =\left (\varphi'(r)-\varphi'(R)\right )^{2}+q(r)^{2}-q(R)^{2},
  \qquad r>0,
\end{equation}
and consequently
\begin{equation}\label{eq:crossing}
  \Psi_{R}(r)>\Psi_{R}(R)\quad\text{for }0<r<R,
  \qquad
  \Psi_{R}(r)<\Psi_{R}(R)\quad\text{for }r>R.
\end{equation}
\end{lemma}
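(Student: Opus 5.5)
The identity \eqref{eq:Psiidentity} is pure algebra. By \eqref{eq:wdef} we have $w=(1-\varphi')/(n-1)$, so
\[
\Psi_R(r)=\varphi'(r)^2+q(r)^2+2\varphi'(R)\bigl(1-\varphi'(r)\bigr)=\bigl(\varphi'(r)-\varphi'(R)\bigr)^2-\varphi'(R)^2+2\varphi'(R)+q(r)^2 .
\]
Evaluating at $r=R$ gives $\Psi_R(R)=-\varphi'(R)^2+2\varphi'(R)+q(R)^2$. Subtracting the two expressions yields \eqref{eq:Psiidentity}; the constant $2\varphi'(R)$ and the term $-\varphi'(R)^2$ cancel.

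For \eqref{eq:crossing}, the case $0<r<R$ is immediate. Since $q$ is strictly decreasing (noted after \eqref{eq:pqderiv}), we have $q(r)^2-q(R)^2>0$, and the squared term is nonnegative.

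The case $r>R$ is the substantive one, and this is where Lemma~\ref{lem:slope} enters. Because both $\varphi'$ and $q$ are positive and decreasing, write
\[
\bigl(\varphi'(r)-\varphi'(R)\bigr)^2+q(r)^2-q(R)^2=a^2-b\,\bigl(q(r)+q(R)\bigr),
\]
where $a=\varphi'(R)-\varphi'(r)\ge0$ and $b=q(R)-q(r)>0$. By the slope bound \eqref{eq:slope}, $-\varphi''\le -q'$ pointwise (both are positive). Integrating from $R$ to $r$ gives $a\le b$, and in fact $a<b$ since the bound $2\sqrt{n-1}/n<1$ is strict.

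It then suffices to show $a<q(r)+q(R)$, because then $a^2<b\,(q(r)+q(R))$. I would try two routes. The first is to use $a\le b=q(R)-q(r)\le q(R)+q(r)$, with strictness from $a<b$; this works since $q>0$. Note that $a\le b$ alone already gives $a^2\le b^2=b\,(q(R)-q(r))<b\,(q(R)+q(r))$, because $q(r)>0$. Thus the negativity follows cleanly, and no fallback is needed.

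The main (and only nontrivial) obstacle is the slope comparison $|\varphi''|\le|q'|$, which is already supplied by Lemma~\ref{lem:slope}. Everything else is elementary bookkeeping with monotonicity and positivity of $\varphi'$ and $q$ from Lemma~\ref{lem:phimono}.
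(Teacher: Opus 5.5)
Your proof is correct and follows the paper's argument essentially verbatim. You use the same algebraic identity via $w=(1-\varphi')/(n-1)$ and the same monotonicity of $\varphi'$ and $q$ for $r<R$. For $r>R$ you integrate $-\varphi''\le -q'$ to get $a\le b$ and then use $a^2\le b^2<b\,(q(R)+q(r))$; this is exactly the paper's bound $\Psi_R(R)-\Psi_R(r)\ge 2q(r)\bigl(q(R)-q(r)\bigr)>0$.

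One sentence in your write-up is off: the claim that $a<q(r)+q(R)$ alone would suffice is not valid without also using $a\le b$. Your final line does use $a\le b$ and is clean, and the strict inequality $a<b$ is not needed there.
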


\begin{proof}
From \eqref{eq:wdef},
\[
w(r)=\frac{1-\varphi'(r)}{n-1}.
\]
Thus, $\Psi_{R}(r)$ takes the form
\[
\Psi_{R}(r)=\varphi'(r)^{2}+q(r)^{2}+2\varphi'(R)(1-\varphi'(r)),
\]
and consequently
\[
\Psi_{R}(r)-\Psi_{R}(R)
= \varphi'(r)^{2}-2\varphi'(R)\varphi'(r)+\varphi'(R)^{2}+q(r)^{2}-q(R)^{2}.
\]
which is \eqref{eq:Psiidentity}. Since $\varphi'$ and $q$ strictly decrease, we obtain $\varphi'(r)>\varphi'(R)$ and $q(r)>q(R)$ for $r<R$. Therefore, the right-hand side of \eqref{eq:Psiidentity} is positive for $r<R$.

Let now $r>R$. Since $q'<0$, the pointwise bound $0<\frac{\varphi''}{q'}\leq 1$ implies $0<-\varphi''\leq -q'$. Integrating from $R$ to $r$ yields
\[
0< \varphi'(R)-\varphi'(r)\leq q(R)-q(r).
\]
Hence, using \eqref{eq:Psiidentity} and $q(r)>0$,
\[
\begin{aligned}
\Psi_{R}(R)-\Psi_{R}(r)
&= q(R)^{2}-q(r)^{2}-\left(\varphi'(R)-\varphi'(r)\right)^{2} \\
&\geq \left(q(R)-q(r)\right)\left(q(R)+q(r)\right)-\left(q(R)-q(r)\right)^{2} \\
&= 2\,q(r)\,\left(q(R)-q(r)\right) > 0,
\end{aligned}
\]
which is the second half of \eqref{eq:crossing}.
\end{proof}

\begin{remark}\label{rem:whynotmonotone}
Lemma~\ref{lem:crossing} is strictly weaker than the monotonicity of $\Psi_{R}$, and this is what makes it valid in every dimension. The argument in \cite{GuLiWan2025} requires $\mathcal{E}+\frac{2(n-1)}{n}w$ to be decreasing on the entire interval $(0,\infty)$. Based on the computation in \cite{GuLiWan2025}*{Lemma~4.4}, this is equivalent to the global bound $\lambda\lambda'\varphi'/\varphi\leq\frac{2n-1}{n-1}$. In $\mathbb{H}^{3}$, the left-hand side of this inequality is unbounded, which means that this approach is invalid for $n=3$. By contrast, the crossing condition \eqref{eq:crossing} holds for all $n\geq 3$ and all $R>0$.
\end{remark}

\section{Proof of the main theorems}\label{sec:proof}

\begin{proof}[Proof of Theorem~\ref{thm:starshaped}]
Set
\[
  \mathcal{W}:=\mathcal{W}(\Omega),\qquad
  \mathcal{W}_0:=\mathcal{W}(B(R)).
\]
By Proposition~\ref{prop:rayleigh} and Theorem~\ref{thm:moment},
\begin{equation}\label{eq:sigma_upper}
  |\Sigma|\,|\Omega|^{2}\eta(\mathcal{W})^{2}\sigma_1(\Omega)
  \leq \int_{\Omega}\mathcal{E}\dd v.
\end{equation}

Since $|\Omega|=|B(R)|$, Lemma~\ref{lem:crossing} gives $\Psi_{R}<\Psi_{R}(R)$ on $\Omega\setminus B(R)$ and $\Psi_{R}>\Psi_{R}(R)$ on $B(R)\setminus\Omega$. Hence
\begin{equation}\label{eq:shell_inequality}
 \int_{\Omega}\Psi_{R}-\int_{B(R)}\Psi_{R}
  =\int_{\Omega\setminus B(R)}\Psi_{R}-\int_{B(R)\setminus\Omega}\Psi_{R}
  \leq0,    
\end{equation}
with equality if and only if $|\Omega\,\triangle\,B(R)|=0$. Substituting the definition of $\Psi_R$ into \eqref{eq:shell_inequality} and using \eqref{eq:E0}, we obtain 
\begin{equation}\label{eq:step3}
  \int_{\Omega}\mathcal{E}\dd v + 2(n-1)\varphi'(R)\,\mathcal{W} \leq \varphi'(R)|B(R)| + 2(n-1)\varphi'(R)\mathcal{W}_0.
\end{equation}

Since $w$ is strictly increasing (Lemma~\ref{lem:phimono}) and $|\Omega|=|B(R)|$, Theorem~\ref{thm:transplant} gives $\mathcal{W}\geq\mathcal{W}_0$. By convexity of $\eta^{2}$ and
\eqref{eq:etatangentdata},
\begin{equation}\label{eq:step4a}
 \begin{split}
  \eta(\mathcal{W})^{2}
  &\geq\frac{1}{|S(R)|^{2}}
   -\frac{2(n-1)}{|B(R)|\,|S(R)|^{2}}(\mathcal{W}-\mathcal{W}_0) \\
  &=\frac{1}{|B(R)|\,|S(R)|^{2}}
   \left(|B(R)|-2(n-1)(\mathcal{W}-\mathcal{W}_0)\right).
 \end{split}
\end{equation}
Rearranging \eqref{eq:step3} gives
$\int_{\Omega}\mathcal{E}\dd v
 \leq \varphi'(R)\left(|B(R)|-2(n-1)(\mathcal{W}-\mathcal{W}_0)\right)$, so \eqref{eq:step4a} becomes
\begin{equation}\label{eq:step4b}
  \eta(\mathcal{W})^{2}
  \geq\frac{\int_{\Omega}\mathcal{E}\dd v}
         {\varphi'(R)\,|B(R)|\,|S(R)|^{2}} > 0.
\end{equation}
Combining \eqref{eq:sigma_upper} and \eqref{eq:step4b}, and applying \eqref{eq:ballsigma} with $\varphi(R)=|B(R)|/|S(R)|$, we obtain
\[
\begin{aligned}
  |\Sigma|\,\sigma_1(\Omega)
  &\leq\frac{\int_{\Omega}\mathcal{E}\dd v}
         {|\Omega|^{2}\eta(\mathcal{W})^{2}} \\
  &\leq\frac{\varphi'(R)|S(R)|^{2}}{|B(R)|}
   =|S(R)|\frac{\varphi'(R)}{\varphi(R)}
   =|S(R)|\,\sigma_1(B(R)),
\end{aligned}
\]
which establishes \eqref{eq:main-product}.

If equality holds in \eqref{eq:main-product}, then equality holds in \eqref{eq:step3}, which forces $|\Omega\,\triangle\,B(R)|=0$. Since $\Omega$ is open with smooth boundary, this means $\Omega=B(R)$.
\end{proof}

\begin{proof}[Proof of Theorem~\ref{thm:main}]
Let $\rho$ be defined by $|B(\rho)|=|\Omega|$. 
Theorem~\ref{thm:isop} gives $|\partial\Omega|\geq|S(\rho)|$, and since $t\mapsto|S(t)|$ is strictly increasing and $|S(R)|=|\partial\Omega|$ (by definition of $\Omega^*=B(R)$), we have $\rho\leq R$. By 
Theorem~\ref{thm:starshaped} and Proposition~\ref{prop:ball},
\[
|\partial\Omega|\,\sigma_1(\Omega)
\leq|S(\rho)|\,\sigma_1(B(\rho))
\leq|S(R)|\,\sigma_1(B(R))
=|\partial\Omega|\,\sigma_1(\Omega^*).
\]
Dividing by $|\partial\Omega|>0$ gives \eqref{eq:main}. If equality holds, then equality holds in both inequalities above. Theorem~\ref{thm:starshaped} forces $\Omega=B(\rho)$, and the boundary area identifies $\rho=R$.
\end{proof}

\section{A harmonic mean inequality}\label{sec:harmonic}

In this final section, we present the following harmonic mean inequality for the first $(n-1)$ non-zero Steklov eigenvalues, which extends \cite{GuLiWan2025}*{Corollary~1.6} to the case $n=3$.

\begin{proposition}\label{prop:harmonic}
Let $n\geq3$ and let $\Omega\subset\mathbb{H}^{n}$ be a smooth bounded domain. If $\partial\Omega$ is star-shaped and mean convex, then
\[
  \sum_{i=1}^{n-1}\frac{1}{\sigma_{i}(\Omega)}
  \geq\sum_{i=1}^{n-1}\frac{1}{\sigma_{i}(\Omega^*)},
\]
where $\Omega^*$ is the geodesic ball with $|\partial\Omega^*|=|\partial\Omega|$. Equality holds if and only if $\Omega$ is a geodesic ball.
\end{proposition}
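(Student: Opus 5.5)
The plan is to run the argument of Theorem~\ref{thm:starshaped} at the level of the Dirichlet-to-Neumann \emph{trace} rather than a single Rayleigh quotient, and then pass from volume normalization to perimeter normalization as in the proof of Theorem~\ref{thm:main}. The analytic core is already available. Combining \eqref{eq:sigma_upper}-type reasoning with \eqref{eq:step4b} (that is, Theorem~\ref{thm:moment}, Lemma~\ref{lem:crossing} and the tangent-line bound from Lemma~\ref{lem:eta}) gives, with $|B(R)|=|\Omega|$,
\begin{equation*}
\frac{\int_{\Sigma}\varphi^{2}\dd\mu}{\int_{\Omega}\mathcal{E}\dd v}
\;\geq\;\frac{|\Sigma|\,|B(R)|^{2}\,\eta(\mathcal{W})^{2}}{\int_\Omega\mathcal{E}\dd v}
\;\geq\;\frac{|\Sigma|\,|B(R)|}{\varphi'(R)\,|S(R)|^{2}}
=\frac{|\Sigma|}{|S(R)|^{2}\,\sigma_1(B(R))}.
\end{equation*}
So everything reduces to a linear-algebra statement bounding $\sum_{i=1}^{n-1}1/\sigma_i(\Omega)$ below by a multiple of this ratio.

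For that statement, I would use the variational characterization of the sum of reciprocals of the first $k$ nonzero Steklov eigenvalues. For $k=n-1$, $\sum_{i=1}^{k}1/\sigma_i(\Omega)$ is the maximum, over $k$-dimensional subspaces $V\subset\{u\in H^1:\int_\Sigma u=0\}$, of $\operatorname{tr}_V\bigl(D^{-1}B\bigr)$. Here $D(u,v)=\int_\Omega\nabla u\cdot\nabla v$ and $B(u,v)=\int_\Sigma uv$. I would take the $n$-dimensional space spanned by $u_i=\varphi\theta_i$, which are admissible after the center-of-mass normalization of Proposition~\ref{prop:rayleigh}.

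Both Gram matrices $D_{ij}$ and $B_{ij}$ transform by conjugation under rotations of the $\theta$-coordinates. Their traces are $\operatorname{tr}D=\int_\Omega\mathcal{E}$ and $\operatorname{tr}B=\int_\Sigma\varphi^2$. I would rotate so that $B$ is diagonal, then choose the $(n-1)$-dimensional subspace obtained by discarding the direction in which the ratio $B_{ii}/D_{ii}$ is smallest. Two inequalities then give the bound. The first is the matrix inequality $\operatorname{tr}_V(D^{-1}B)\geq\sum_{i\in V}B_{ii}/D_{ii}$, valid when $B$ is diagonal on $V$, by Cauchy--Schwarz or Schur--Horn applied to $D|_V$. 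The second is the elementary inequality that the sum of the $n-1$ largest ratios $B_{ii}/D_{ii}$ is at least $\tfrac{n-1}{n}$ of the sum of all of them. The goal is
\begin{equation*}
\sum_{i=1}^{n-1}\frac{1}{\sigma_i(\Omega)}\;\geq\;(n-1)\,\frac{\int_\Sigma\varphi^2\dd\mu}{\int_\Omega\mathcal{E}\dd v}.
\end{equation*}
On a ball this must be an equality, since all $n$ eigenvalues coincide there by Proposition~\ref{prop:ball}.

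This step is the main obstacle. Passing from a sum of ratios to a ratio of sums is not automatic, and the crude averaging above only gives it when the ratios $B_{ii}/D_{ii}$ are comparable. If it fails, I would fall back on Brock's device. One chooses the rotation inductively, Hersch-style, so that $u_i$ is $B$-orthogonal to the first $i-1$ eigenfunctions. This gives $\sigma_i B_{ii}\leq D_{ii}$, hence $1/\sigma_i\geq B_{ii}/D_{ii}$. One then exploits the freedom to put the direction with the largest $D_{ii}$ last and drop it, because the spherical part of $\mathcal{E}$ is split among the $n$ directions while the radial part is shared. I would check that the resulting estimate still closes with the volume-normalized inequality displayed above.

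Finally, I would compare with $\Omega^*=B(R^*)$, where $|S(R^*)|=|\Sigma|$. Combining the two displays gives $\sum_{i=1}^{n-1}1/\sigma_i(\Omega)\geq (n-1)\,|\Sigma|/\bigl(|S(R)|^{2}\sigma_1(B(R))\bigr)$. By Theorem~\ref{thm:isop} we have $R\leq R^*$, and by the monotonicity in Proposition~\ref{prop:ball}, $|S(R)|\sigma_1(B(R))\leq|S(R^*)|\sigma_1(B(R^*))$. Together with $|S(R)|\leq|\Sigma|=|S(R^*)|$, this yields $\sum_{i=1}^{n-1}1/\sigma_i(\Omega)\geq (n-1)/\sigma_1(\Omega^*)=\sum_{i=1}^{n-1}1/\sigma_i(\Omega^*)$. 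For the equality case, equality forces equality in \eqref{eq:step3}, hence $\Omega=B(R)$ as in the proof of Theorem~\ref{thm:starshaped}.
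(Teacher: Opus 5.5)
\textbf{There is a genuine gap.} It sits in the step you yourself flag as the main obstacle:
\[
\sum_{i=1}^{n-1}\frac{1}{\sigma_i(\Omega)}\geq(n-1)\,\frac{\int_\Sigma\varphi^2\dd\mu}{\int_\Omega\mathcal{E}\dd v}.
\]
The paper does not prove this; it imports it as \cite{Verma2021}*{eq.~(10)}. Neither of your routes establishes it.

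Your first route cannot work from linear algebra alone. Take $n=3$, $D=\operatorname{diag}(\epsilon,\epsilon,1)$ and $B=\operatorname{diag}(0,0,1)$. The two largest generalized eigenvalues sum to $1$, whereas $(n-1)\operatorname{tr}B/\operatorname{tr}D\approx 2$. So some geometric input about $D$ is indispensable, and the unweighted averaging of the ratios $B_{ii}/D_{ii}$ uses none.

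In the fallback there is no freedom to choose which direction comes last. Requiring $u_i$ to be $L^2(\Sigma)$-orthogonal to the first $i-1$ eigenfunctions determines the rotated frame, generically up to signs. The heuristic also misidentifies the mechanism. Both parts of $\mathcal{E}$ are split among the directions, since $|\nabla(\varphi\theta_i)|^2=(\varphi')^2\theta_i^2+\frac{\varphi^2}{\lambda^2}(1-\theta_i^2)$. The radial weights sum to $1$ and the spherical weights sum to $n-1$.

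\textbf{The missing ingredient} is the pointwise comparison $\varphi'(r)<\varphi(r)/\lambda(r)$ for $r>0$ and $n\geq3$.
\begin{itemize}
\item By \eqref{eq:phi1} it is equivalent to $\int_0^r\lambda^{n-1}\dd t>\lambda^n/(1+(n-1)\lambda')$. Both sides vanish at $r=0$, and the derivative of their difference is $(n-2)(\lambda'-1)\lambda^{n-1}/(1+(n-1)\lambda')^2>0$.
\item Let $e\in\mathbb{R}^n$ be a unit vector and $\theta_e$ the corresponding combination of the $\theta_i$. Then $|\nabla(\varphi\theta_e)|^2$ is a convex combination of $(\varphi')^2$ and $\varphi^2/\lambda^2$. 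Hence $|\nabla(\varphi\theta_e)|^2\leq\varphi^2/\lambda^2\leq\mathcal{E}/(n-1)$.
\item Consequently $D_{ii}\leq\operatorname{tr}D/(n-1)$ in every rotated frame.
\end{itemize}

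With this, your first route closes once the crude averaging is replaced by a weighted one. The quantity $\operatorname{tr}B/\operatorname{tr}D=\sum_i(D_{ii}/\operatorname{tr}D)(B_{ii}/D_{ii})$ is an average whose weights are all at most $\frac{1}{n-1}$. So it is at most $\frac{1}{n-1}$ times the sum of the $n-1$ largest ratios.

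The Hersch-style route also closes, with no reordering. The weights $D_{ii}$ lie in $[0,\operatorname{tr}D/(n-1)]$ and $1/\sigma_i$ is nonincreasing, so $\sum_{i=1}^n D_{ii}/\sigma_i\leq\frac{\operatorname{tr}D}{n-1}\sum_{i=1}^{n-1}1/\sigma_i$. This is exactly Verma's inequality.

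\textbf{A separate computational slip.} Since $\sigma_1(B(R))=\varphi'(R)|S(R)|/|B(R)|$, the quantity $|\Sigma||B(R)|/(\varphi'(R)|S(R)|^2)$ equals $|\Sigma|/(|S(R)|\,\sigma_1(B(R)))$, not $|\Sigma|/(|S(R)|^2\sigma_1(B(R)))$. With the correct expression, the final comparison needs only $R\leq R^*$ and the monotonicity in Proposition~\ref{prop:ball}. The inequality $|S(R)|\leq|\Sigma|$ is then not needed.
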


\begin{proof}
In the proof of Theorem~\ref{thm:starshaped}, the balance condition is used only to set up the test functions via Proposition~\ref{prop:rayleigh}. The rest of the proof requires merely that $\partial\Omega$ is star-shaped 
and mean convex. From those parts one obtains
\[
  \frac{\int_{\Omega}\mathcal{E}\dd v}{\int_{\Sigma}\varphi^{2}\dd\mu}
  \leq\sigma_1(\Omega^*).
\]
Verma~\cite{Verma2021}*{eq.~(10)} showed that after a suitable re-centring,
\[
  \int_{\Sigma}\varphi^{2}\dd\mu
  \le\frac{1}{n-1}\sum_{i=1}^{n-1}\frac{1}{\sigma_{i}(\Omega)}
     \int_{\Omega}\mathcal{E}\dd v.
\]
Since $\sigma_1(\Omega^*)=\cdots=\sigma_{n}(\Omega^*)$, the two inequalities combine to give the claimed bound. The equality case follows from the equality case of Theorem~\ref{thm:starshaped}. 
\end{proof}

\section*{Acknowledgments}
The authors thank Professor Bobo Hua for his guidance and constant support. The second author is supported by NSFC (Nos.~12101125 and 12371052) and the Fujian Alliance of Mathematics (No.~2024SXLMMS01). The third author is partially supported by the National Key R\&D Program of China (No.~2025YFA1017500).

\section*{Declarations}

\noindent\textbf{Conflict of Interest}\ \ The authors declare that they have no conflict of interest.

\vspace{1em}
\noindent\textbf{Data Availability}\ \ Data sharing is not applicable to this article as no datasets were generated or analyzed during the current study.

\vspace{1em}
\noindent\textbf{AI assistance statement}\ \  The authors used OpenAI's ChatGPT with the GPT-5.6 Sol model to assist with initial conceptualization, symbolic checking and manuscript editing. All mathematical statements and proofs were independently verified by the authors, who take full responsibility for the content and accuracy of the article.

\raggedbottom
\bibliographystyle{plain}
\bibliography{weinstock}

\end{document}